\numberwithin{equation}{section}
\newcommand{\euc}[1]{\mathbb{E}^{#1}}
\newcommand{\ik}{\mathbb{K}}
\newcommand{\iH}{\mathbb{H}}
\newcommand{\hu}{\hat{u}}
\newcommand{\hv}{\hat{v}}
\newcommand{\he}{\hat{e}}
\renewcommand{\vec}{}
\theoremstyle{plain}
\newtheorem{theorem}{Theorem}
\newtheorem{lemma}{Lemma}
\newtheorem*{corollary}{Corollary}
\theoremstyle{definition}
\newtheorem{definition}{Definition}
\theoremstyle{remark}
\begin{document}

\title{
Piecewise flat approximations of local extrinsic curvature for non-Euclidean embeddings
}

\author{Rory Conboye}

\date{}
\maketitle

\vspace{-0.8cm}
\begin{center}
\emph{Dedicated to the memory of Niall {\'O} Murchadha}
\end{center}
\vspace{-0.2cm}

\let\thefootnote\relax\footnotetext{
\hspace{-0.75cm}
	Department of Mathematics and Statistics
	\hfill conboye@american.edu \\
	American University
	 \\
	4400 Massachusetts Avenue, NW
	 \\
	Washington, DC 20016, USA
}
\let\thefootnote\svthefootnote

\begin{abstract}
	Discrete forms of the mean and directed curvature are constructed on piecewise flat manifolds, providing local curvature approximations for smooth manifolds embedded in both Euclidean and non-Euclidean spaces.
	The resulting expressions take the particularly simple form of a weighted scalar sum of hinge angles, the angles between the normals of neighbouring piecewise flat segments, with the weights depending only on the intrinsic piecewise flat geometry and a choice of dual tessellation. 
	The constructions are based on a new piecewise flat analogue of the curvature integral along and tangent to a geodesic segment, with integrals of these analogues then taken over carefully defined regions to give spatial averages of the curvature. 
	Computations for surfaces in both Euclidean and non-Euclidean spaces indicate a clear convergence to the corresponding smooth curvature values as the piecewise flat mesh is refined, with the former comparing favourably with other discrete curvature approaches.
	
\

\noindent{Keywords:
	Discrete differential geometry, Regge calculus, piecewise linear.
	}

\end{abstract}

\section{Introduction}


Our current concept of curvature first appeared as far back as the 1300's, when Nicole Oresme defined the curvature of a circle as the inverse of its radius \cite{MedMyst}. This was later extended to general planar curves as the inverse radius of the osculating (kissing) circle, the circle that best-fits the curve at a point. With the advent of Calculus, a number of equivalent definitions were found, such as the change in normal or tangent vectors with respect to arc-length, or the change in arc-length along a one-parameter family of offset curves. When a curve is discretized using piecewise linear segments, discrete curvature measures naturally arise from these definitions by taking small finite changes instead of differential limits.


Generalizing to higher dimensional manifolds in Euclidean space, the curvature definitions follow through with little alteration, though a direction must now be specified, and piecewise linear curves can be generalized to simplicial piecewise flat manifolds, formed by joining flat simplices (triangles, tetrahedra, etc.) along their edges or faces, known as hinges.  Directly discretizing the smooth curvature definitions often leads to ambiguities here, so many approaches also use some form of spatial averaging. 
A particularly rich variety of such approaches has been developed in the field of Discrete Differential Geometry \cite{Petitj02,DDG,AdvancesDDG}, mostly with an interest in image processing, architecture, and 3D object modelling.


The curvature of \emph{non}-Euclidean embeddings is particularly important in General Relativity, not least in determining the evolution of the spatial geometry in numerical formulations \cite{ADM,MTW,Alc}. However, there are a number of challenges associated with discrete approximations for non-Euclidean embeddings. For a start, piecewise flat segments will generally not be embeddable in a non-Euclidean space. Instead, piecewise flat approximations must be made \emph{intrinsically}, with edge-lengths defined by the lengths of geodesic segments on the manifold. Some of the embedding information can then be recovered by extending the piecewise flat approximation to the embedding space around it. In particular, hinge angles, the angles between the normals of neighbouring simplices, can be determined in this way.


While a piecewise flat approach to General Relativity has existed since the 1960's, known as Regge Calculus \cite{Regge}, local curvature approaches have only involved single hinge angles \cite{Brewin,KLM89ec,TraceK}, which fail to approximate smooth extrinsic curvature at a local level, even for highly symmetric models. Unfortunately, the approaches from Discrete Differential Geometry cannot easily be adapted to non-Euclidean embeddings either, since they rely on a Euclidean embedding for the addition of vectors and tensors or the defining of normal vectors at vertices (see section \ref{sec:OtherApproaches} for more details).


Here, some of the spatial averaging concepts common in Discrete Differential Geometry are used to provide local approximations of smooth extrinsic curvature, but using only the hinge angles and the intrinsic geometry of a piecewise flat manifold, which can be applied to non-Euclidean embeddings. The foundation for this approach is a new piecewise flat analogue of a curvature path integral, found by expressing the finite change of a unit tangent vector in terms of the hinge angles. Integrals of this analogue are then taken over carefully defined regions, resulting in curvature averages as weighted scalar sums of the hinge angles. Similar approaches have already been used for both the scalar and Ricci curvature \cite{PLCurv}, also giving an effective piecewise flat Ricci flow \cite{PLRFGowdy}.


The new curvature constructions are computed for a series of piecewise flat approximations of two irregular surfaces in Euclidean three-space, and for two different grid types for a surface embedded in a non-Euclidean Gowdy space \cite{Gowdy}. The resulting piecewise flat curvatures give reasonable approximations for low resolutions, and indicate a clear convergence to the corresponding smooth curvature values as the resolutions are increased. The results for the Euclidean-embedded surfaces also closely match with results from approaches in Discrete Differential Geometry.


The rest of the paper starts with some background material in section \ref{sec:Background}, defining smooth curvature, piecewise flat manifolds, and hinge angles, and outlining other discrete curvature approaches from Regge Calculus and Discrete Differential Geometry. Section \ref{sec:GenApp} begins with a motivation and general procedure for the current approach, then extends curvature integrals to piecewise flat manifolds, and proves the piecewise flat analogue of the curvature path integral based on these. The new curvature constructions are developed in section \ref{sec:New}, with the choice of spatial regions motivated and resulting expressions proved, given the curvature integral extensions. Details and results of the computations are provided in section \ref{sec:Comp}.

\section{Background}
\label{sec:Background}

\subsection{Smooth differential curvature}
\label{sec:Smooth}


The curvature $\kappa$ of a curve in two-dimensions can be defined as the rate of change of the unit tangent vector $\hat{T}$, or the angular change $\psi$ of $\hat{T}$, with respect to the arc-length parameter $s$,
\begin{equation}
	\kappa 
	:= \left|\frac{d \hat{T}}{d s}\right|
	 = \left|\frac{d \psi}{d s}\right|
	.
	\label{eq:kappaE2}
\end{equation}
For an $n$-dimensional manifold $M^n$, embedded in some ambient space $N^{n + 1}$, this approach can be generalized by using the rate of change of a unit tangent vector $\hv$, in the direction of $\hv$ itself. Taking the normal component of the result ensures that changes of $\hv$ within $M^n$ do not contribute, giving a directional curvature 
\begin{equation}
	\kappa (\hv)
	= \left< \nabla_{\hv} \hv, \hat{n} \right> ,
	\label{eq:kappa}
\end{equation}
where $\hat{n}$ is the unit normal vector to $M^n$, $\nabla$ is the covariant derivative associated with the Levi-Civita connection on $N^{n+1}$, and $\left< \cdot , \cdot \right>$ is the inner product on the tangent space of $N^{n+1}$ at any given point. When the ambient space is Euclidean, $\kappa (\hv)$ is equivalent to the inverse radius of the circle that best fits $M^n$ in the direction of $\hv$ at a given point.


It is common to measure the complete embedding curvature by generalizing $\kappa(\hv)$ to include changes of a different unit tangent vector $\hu$, in the direction of $\hv$. Different fields of study have different names for this curvature, including the second fundamental form $\alpha$, extrinsic curvature tensor $K_{a b}$, and shape operator $Q$. There are slight technical differences between the three but they are closely related, with
\begin{equation}
	\alpha (\hu, \hv)
	= K_{a b} \, \hu^a \hv^b
	= \left< Q(\hv), \hu \right>
	= \left< \nabla_{\hv} \hu, \hat{n} \right> .
	\label{eq:curvatures}
\end{equation}
Here, it will be more convenient to consider only $\kappa (\hv)$, which can still give the complete embedding curvature at each point $x \in M^n$. This can be done by giving the values of $\kappa (\hv)$ in the $n$ principal curvature directions, or for any set of $n$ linearly independent vector fields tangent to $M^n$ in a neighbourhood of $x$, along with the $(n-1)/2$ vector fields given by the difference between each pair of these. The mixed-argument values of the second fundamental form can be given in terms of these by using its bilinear and symmetry properties,
\begin{eqnarray}
 &&\alpha (\hu - \hv, \hu - \hv)
 = \alpha (\hu, \hu)
 - 2 \alpha (\hu, \hv)
 + \alpha (\hv, \hv)
 \nonumber \\ \Leftrightarrow \qquad
 &&\alpha (\hu, \hv)
 = \frac{1}{2}\big(
 \kappa(\hu)
 + \kappa(\hv)
 - \kappa(\hu - \hv)
 \big) .
 \label{eq:MixedForm}
\end{eqnarray}


Many applications also depend on the average of the directional curvature at each point, known as the mean curvature. This can be found by adding the directional curvatures for an orthonormal set of vector fields $\{ \he_i \}$ tangent to $M^n$,
\begin{equation}
	H
	= \frac{1}{n} \sum_i^n \kappa (\he_i)
	.
	\label{Hsmooth}
\end{equation}
Note that in much of the physics literature the mean curvature refers to the trace of the extrinsic curvature tensor and is denoted $K$, with $K = n H$. Also, for surfaces in $\euc{3}$, the Laplace-Beltrami operator is twice the mean curvature vector $H \hat{n}$. Integrating the mean curvature over all of $M^n$ gives the \emph{total} curvature, $H_{total} = \int_{M^n} H d V$.

\subsection{Piecewise flat submanifolds}


Piecewise flat manifolds are formed by joining flat Euclidean segments together. The most simple of these segments are $n$-simplices (line segments, triangles, tetrahedra, etc.), since their shape is entirely fixed by the lengths of their edges. Here, all piecewise flat manifolds are assumed to be simplicial, with their topology determined by the simplicial graph, and their intrinsic geometry by the set of edge-lengths. This is a major advantage of piecewise flat manifolds, with the topology isolated from the geometry, and the intrinsic geometry specified without any need for coordinates.


The specific embedding of an orientable piecewise flat manifold in $\euc{n+1}$ can be determined by the relative orientations of neighbouring simplices, with pairs of adjacent $n$-simplices \emph{hinging} along the co-dimension-one simplex that they share. These $(n-1)$-simplices are known as hinges, with the angle between the normals on either side of a hinge $h$ 
known as a hinge angle and denoted $\phi_h$. 
A hinge angle can also be defined as the angle between vectors that are tangent to neighbouring $n$-simplices, and orthogonal to the hinge joining them, see figure \ref{fig:hinge}.
Here, a hinge angle is considered positive if the $n$-simplices are concave, and negative if convex, relative to a given orientation.

\begin{figure}[h]
	\centering
	\includegraphics[scale=1]{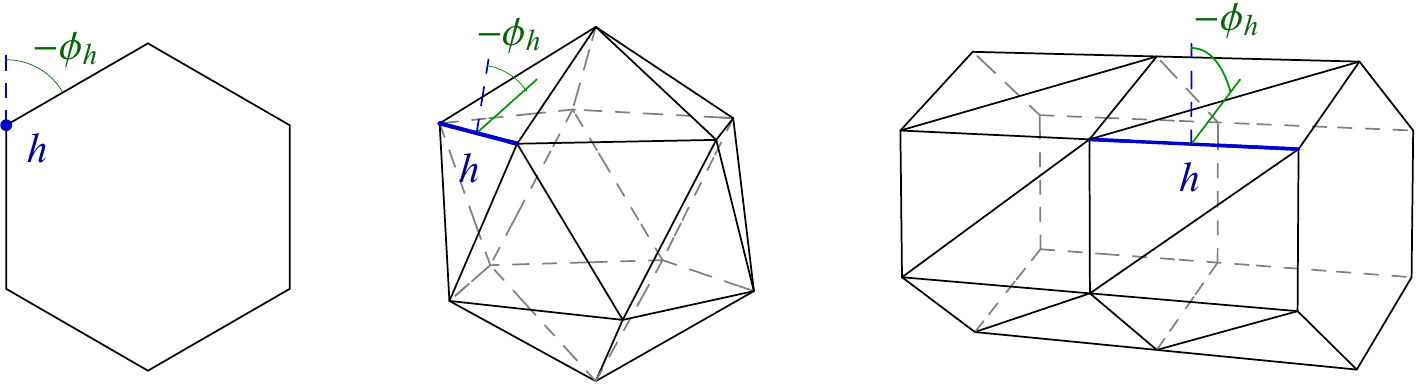}
	\caption{Triangulations and hinge angles for a circle in $\euc{2}$, and a sphere and cylinder in $\euc{3}$.}
	\label{fig:hinge}
\end{figure}


As with piecewise linear approximations of a curve, there are many ways to form a piecewise flat approximation, or triangulation, of a smooth manifold $M^n \subset \euc{n+1}$. The most common method is to take a set of points on $M^n$ and then create a piecewise flat manifold $S^n$ with these points as vertices, possibly with a global rescaling of $S^n$ to give an equivalent $n$-volume to $M^n$. In general, a good approximation will have uniformly small hinge angles.

\subsection{Non-Euclidean embeddings}
\label{sec:PFnonE}


Since flat $n$-simplices are generally not embeddable in a non-Euclidean space $N^{n + 1}$, a piecewise flat approximation of a smooth manifold $M^n \subset N^{n + 1}$ can only be made intrinsically. This is done by constructing a simplicial graph on $M^n$, using geodesic segments as edges, and defining a piecewise flat manifold $S^n$ to have the same graph, with its edge-lengths determined by the lengths of the corresponding geodesic segments in $M^n$.

\begin{figure}[h]
	\centering
	\includegraphics[scale=1]{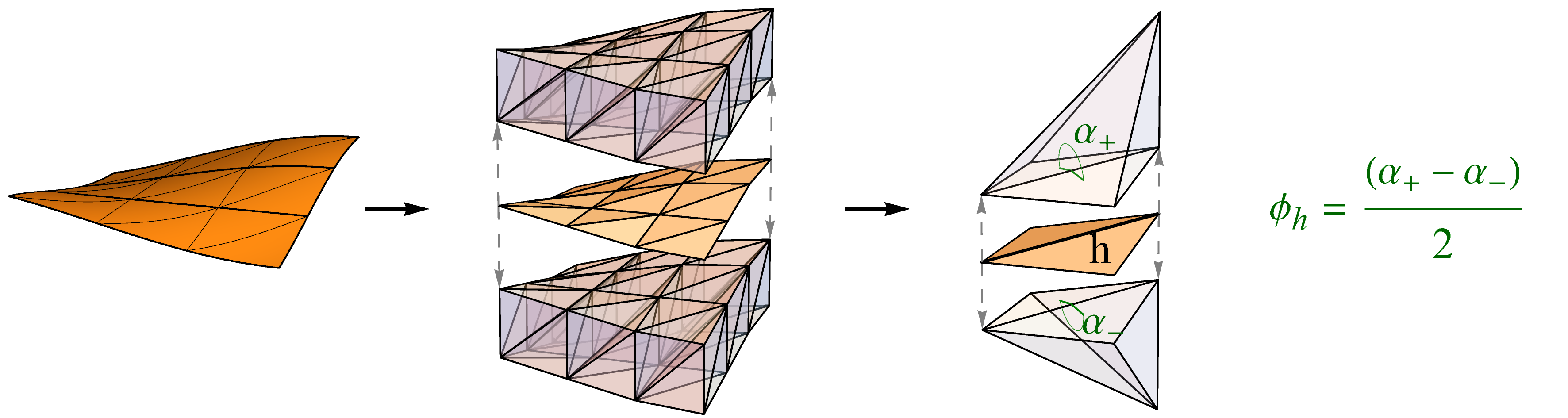}
	\caption{For a surface embedded in a non-Euclidean three-manifold, a pair of three-dimensional piecewise flat layers can be constructed on either side to specify the hinge angles.}
	\label{fig:NonEuclid}
\end{figure}


Information about the embedding of $M^n$ can be found by extending $S^n$ to form an $(n+1)$-dimensional piecewise flat approximation of $N^{n+1}$ in the region around $M^n$, see figure \ref{fig:NonEuclid}. The space around a hinge in $S^n$ will generally not be Euclidean, but it will be locally Euclidean on each side of $S^n$, consisting of a series of flat connected $(n+1)$-simplices. Hinge angles can then be determined in the usual way on each side, and an average of the two taken.

\subsection{Other discrete curvature approaches}
\label{sec:OtherApproaches}


The core of most piecewise flat curvature approaches is formed by a discrete analogue of a smooth curvature concept. Some examples include:
\begin{itemize}
	\item the hinge angles, the change in unit normal vector between neighbouring $n$-simplices;
	
	\item the change in vertex-based unit normal vectors along the length of each edge;
	
	\item the half-edge curvature, the inverse radius of a circular arc parallel to the midpoint and orthogonal to a normal vector at one end (see \cite{Petitj02}, Fig. 6 for a useful diagram).
\end{itemize}
These generally appear in weighted scalar, vector, or tensor sums, often resulting from a generalization of a smooth curvature measure. Some of the more prominent of these are briefly described below.

\emph{Steiner and Regge calculus:} Piecewise flat curvature analogues have existed since at least 1840, when Steiner \cite{Steiner} proposed an analogue of the total mean curvature for a convex polyhedron as the sum of its hinge angles, weighted by the corresponding edge lengths. 
This curvature analogue appeared as a coefficient in Steiner's formula for the volume enclosed by a surface offset from a convex polyhedron in $\euc{3}$. In the 1980's, Hartle and Sorkin \cite{HS81} gave a generalization of this formula, possibly unaware of Steiner's result, while introducing boundary terms to the Regge Calculus action. For the boundary of a piecewise flat four-manifold, the hinge angles were now weighted by the area of the hinge faces $|h_i|$,
\begin{equation}
	H_{total} = \sum_i |h_i| \phi_i .
	\label{HH81}
\end{equation}
Local curvatures have been suggested by taking a single hinge angle divided by a characteristic length \cite{KLM89ec}, or by dividing $|h|\phi_h$ by a hinge-based volume \cite{TraceK}. 
Unfortunately, these single-hinge approaches can fail to approximate smooth curvature values for even highly symmetric models, such as the cylinder triangulation in figure \ref{fig:hinge}, where the diagonal edges have a zero hinge angle, despite the corresponding smooth curvature being non-zero.

\emph{Cotan formula:} The cotan formula, first introduced by Pinkall and Polthier in \cite{PP93} is probably the most well developed method for locally approximating the mean curvature of smooth surfaces in $\mathbb{E}^3$. The formula is a weighted vector-sum of the edge-vectors around a vertex $v_i$, producing a mean curvature vector at the vertex. The contribution from each edge is equivalent to the inverse radius of the edge-based circle arc mentioned above, weighted by half the circumcentric area associated with the edge. The circumcentric areas can be expressed in terms of the cotangents of the two angles $\alpha$ and $\beta$ opposite the given edge, giving the formula
\begin{equation}
	H \hat{n}_i
	= \frac{1}{4 A_i}\sum_{v_j \in \star(v_i)} (\cot \alpha + \cot \beta) (\vec{v_i} - \vec{v}_j) ,
	\label{cotan}
\end{equation}
with $\vec{v_i}$ representing the vector in $\euc{3}$ for the given vertex, $\vec{v}_j$ the vectors for the vertices joined to it by an edge, and $A_i$ the Voronoi region dual to the vertex $v_i$. The formula can easily be extended to higher dimensions and co-dimensions \cite{MDSB03}, and some convergence properties have been studied in \cite{WarDDG,BS07}. 
While the interpretation given here relies on a triangulation being Delaunay, the approach appears to be more robust in practice, with the formula even defined for a mixed Voronoi-barycentric dual tessellation in \cite{MDSB03}.

\emph{Tensor sum of hinge angles:} In order to extend the Steiner formula to non-convex polyhedra, Cohen-Steiner and Morvan \cite{CSM03,CSM06} used the geometric measure theory concept of a normal cycle, a generalization of the normal bundle, essentially allowing for an overlapping of the offset for non-convex parts. Over a region $B$ of a piecewise flat submanifold of $\euc{3}$, the result is almost the same as a tensor sum of the hinge angles $\phi_i$, weighted by the length of the corresponding hinge lying within the region $B$. The expression provided in \cite{CSM03} for the anisotropic curvature measure over $B$ is
\begin{equation}
	\bar H (B) = \sum_{i} \frac{|h_i \cap B|}{2} \left[
	(\phi_i - \sin \phi_i) e_i^+ \otimes e_i^+
	+ (\phi_i + \sin \phi_i) e_i^- \otimes e_i^-
	\right],
	\label{CSM06}
\end{equation}
with $e_i^+$ and $e_i^-$ representing the normalized sum and difference of the unit normal vectors to the triangles joined along $h_i$. This expression is also shown to converge to the corresponding smooth curvature tensor under certain sampling conditions.

\emph{Other approaches:} Taubin \cite{Taub95} uses a weighted tensor sum of the same half-edge curvatures used to derive the cotan formula, but in contrast to Cohen-Steiner and Morvan, the sum is used to approximate an integral of the directed curvature over all directions at a point, rather than a spatial integral of the curvature tensor itself. The principal directions are still eignenvectors of the resulting tensor, but the principal curvatures need to be found from a pair of linear equations in the eigenvalues of the tensor. Instead of a tensor sum, Rusinkiewicz \cite{Russ04} uses a least-squares fit of the curvatures along the edges of a triangle, with the curvature measure coming from the change in vertex normal vectors along an edge.

\section{Preliminaries}
\label{sec:GenApp}

\subsection{General Approach}
\label{sec:GenApproach}

The aim of this paper is to use piecewise flat approximations of a smooth manifold to do two things: (i) to give effective local approximations of the extrinsic curvature; and (ii) to do so when the manifold is embedded in a non-Euclidean space. The first of these is achieved by a number of approaches in Discrete Differential Geometry, but all rely on a Euclidean embedding. This occurs either explicitly, such as adding vectors or tensors in $\euc{3}$, or implicitly, by requiring the construction of normal vectors at vertices, for example. In Regge Calculus, non-Euclidean embeddings are dealt with effectively for approximations of the total mean curvature, but attempts at localizing this can be seen to deviate from the smooth curvature even for highly symmetric models. 
While the approaches in both fields are unsuited to the aim of this paper, they have inspired a set of guiding principles outlined below.

\

\emph{Guiding principles:}
\begin{itemize}
	
	\item Integrals of smooth differential properties should be seen as entities in their own right, and it is these that should provide correlations between the piecewise flat and smooth. This follows an understanding from both Steiner and Regge, and has also been elucidated by Schr{\"o}der in \cite{SchroDDG}.	
	
	\item Averages over spatial $n$-volumes should be used to properly sample geometric properties on piecewise flat manifolds. This follows many of the Discrete Differential Geometry approaches, with an explicit argument for this given in \cite{MDSB03}.
	
	\item Geometric approximations should be associated with an appropriate type of graph element rather than setting up coordinate bases at vertices or within $n$-simplices. This embraces the coordinate-independent nature of piecewise flat manifolds that served as a motivation for Regge's original work \cite{Regge}.
	
\end{itemize}

Following the first principle, the smooth curvature integral along and tangent to a geodesic segment can be extended to piecewise flat manifolds by recognizing its equivalence to the total angular change of a tangent vector (shown in section \ref{sec:SmoothCurvIntegrals}), and re-defining the curvature integral accordingly. 
While this can be used to approximate the smooth curvature directly, an average can also be taken over a set of shorter geodesic segments, as suggested by the second principle, allowing for better approximations while intersecting fewer piecewise flat segments.
The orientation of these geodesic segments relative to a particular graph element can then be used to relate the curvature to a direction in the corresponding smooth manifold, fitting with the third principle. 
This leads to a general procedure for constructing piecewise flat curvature approximations.

\

\emph{General procedure:}

\begin{enumerate}
	\item Select a graph element to associate with a particular curvature.
	
	\item Define an $n$-volume region around this graph element, intercepting an appropriate sampling of hinges and capable of being intrinsically foliated into parallel line segments.
	
	\item Give the line integral of the curvature along each line segment, using the piecewise flat analogue of the cuvature integral along and tangent to a geodesic segment.
	
	\item Integrate these curvature line integrals over the foliation to give an $n$-volume curvature integral.

	\item Divide by the $n$-volume of the region to give an average of the curvature over this region.
	
\end{enumerate}

Before this procedure can be applied, in section \ref{sec:SmoothCurvIntegrals}, certain curvature integrals are re-defined so they can be used on piecewise flat manifolds, and in section \ref{sec:GeoTangent}, the piecewise flat analogue of the curvature integral along and tangent to a geodesic segment is given. The procedure above is then used to construct the piecewise flat mean and directed curvatures in section \ref{sec:New}.

\subsection{Extending curvature integrals to piecewise flat manifolds}
\label{sec:SmoothCurvIntegrals}

The smooth concept of curvature at a point clearly does not apply to piecewise flat manifolds, giving zero values within each $n$-simplex and infinite values at the hinges. However, lemma \ref{lem:SmoothGeoInt} below relates a \emph{path integral} of the curvature on a smooth manifold to the finite angular change in a tangent vector, a concept that \emph{is} well-defined on piecewise flat manifolds. In fact, the hinge angles themselves can be viewed as the finite change in a tangent vector along a path intercepting the corresponding hinge.

In definition \ref{def:geoInt}, this curvature path integral is then re-defined \emph{as} the finite angular change in the tangent vector, so that it can be applied to piecewise flat manifolds. This forms the bases for extensions to two spatial curvature integrals that follow.

\begin{lemma}[Smooth curvature integral along a geodesic segment]
	\label{lem:SmoothGeoInt}
	On a smooth manifold $M^n \subset N^{n + 1}$, the integral of the curvature along and tangent to a geodesic segment $\gamma$ is given by the total angular change in the unit tangent vector $\hat{v}$ to $\gamma$,
	\begin{equation}
		\int_\gamma \kappa(\hat{v}) ds
		= \int_\gamma d\psi
		,
	\end{equation}
	where $\psi$ measures the angular change in $\hat{v}$ along $\gamma$.
\end{lemma}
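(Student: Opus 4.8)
The plan is to exploit the fact that $\gamma$ is a geodesic of $M^n$, so that the entire ambient covariant derivative $\nabla_{\hat v}\hat v$ points in the normal direction, and then to identify the magnitude of this derivative with the rate at which $\hat v$ rotates. First I would decompose the ambient derivative via the Gauss formula: for vector fields tangent to $M^n$ one has $\nabla_{\hat v}\hat v = \nabla^M_{\hat v}\hat v + \mathrm{II}(\hat v, \hat v)$, where $\nabla^M$ is the connection induced on $M^n$ and $\mathrm{II}$ is the normal-valued second fundamental form. Since $\gamma$ is a geodesic of $M^n$, the tangential part $\nabla^M_{\hat v}\hat v$ vanishes identically along $\gamma$. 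Because $M^n$ has co-dimension one in $N^{n+1}$, the normal space at each point is spanned by $\hat n$, so $\mathrm{II}(\hat v, \hat v) = \langle \nabla_{\hat v}\hat v, \hat n\rangle\,\hat n = \kappa(\hat v)\,\hat n$ by the identification $\alpha(\hat v,\hat v)=\kappa(\hat v)$ in \eqref{eq:curvatures}. Hence $\nabla_{\hat v}\hat v = \kappa(\hat v)\,\hat n$ along $\gamma$.

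Next I would translate the magnitude of this derivative into an angular rate. As $\hat v$ is parametrized by arc length, $\nabla_{\hat v}\hat v$ is exactly the covariant derivative of the unit field $\hat v$ along $\gamma$. Differentiating $\langle \hat v,\hat v\rangle = 1$ gives $\langle \nabla_{\hat v}\hat v, \hat v\rangle = 0$, so the derivative is orthogonal to $\hat v$; combined with the previous step this confines the infinitesimal rotation of $\hat v$ to the $2$-plane spanned by $\hat v$ and $\hat n$. Measuring angles via parallel transport in $N^{n+1}$, the standard fact that the speed of the tip of a unit vector equals its angular speed gives $|\nabla_{\hat v}\hat v| = |d\psi/ds|$, where $\psi$ is the angle swept in this plane. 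Combining with the first step yields $|\kappa(\hat v)| = |d\psi/ds|$, and orienting $\psi$ so that rotation toward $\hat n$ counts as positive promotes this to the signed identity $\kappa(\hat v)\,ds = d\psi$. Integrating along $\gamma$ then gives $\int_\gamma \kappa(\hat v)\,ds = \int_\gamma d\psi$.

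I expect the main obstacle to be the sign bookkeeping in the final step, namely passing from the unsigned equality of magnitudes to the signed differential identity. This requires fixing a consistent orientation for the rotation angle $\psi$ relative to $\hat n$ that matches the sign convention in the definition \eqref{eq:kappa} of $\kappa(\hat v)$. The geometric content that the rotation stays in the $(\hat v,\hat n)$-plane makes the choice unambiguous, but it is worth stating explicitly, since the orientation-dependent signs assigned to hinge angles later in the paper should inherit exactly this convention.
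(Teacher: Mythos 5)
Your proposal is correct and follows essentially the same route as the paper's proof: the geodesic condition kills the tangential part of $\nabla_{\hat v}\hat v$ (your explicit invocation of the Gauss formula is just a named version of the paper's restriction $\nabla^M_{\hat v}\hat v = 0$), the remaining normal component is identified with the angular rate $\tfrac{d\psi}{ds}\,\hat n$, and the sign convention tying positive rotation to the $\hat n$ direction matches the paper's. Your additional remarks---the orthogonality $\langle \nabla_{\hat v}\hat v, \hat v\rangle = 0$ confining the rotation to the $(\hat v, \hat n)$-plane, and the explicit attention to sign bookkeeping---are sound refinements of the same argument rather than a different approach.
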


\begin{proof}
	From the definition of the directional curvature in (\ref{eq:kappa}),
	\begin{equation}
		\kappa(\hat{v})
		= \left<
		\nabla_{\hat{v}} \hat{v}, \, \hat n \right> .
	\end{equation}
	Since $\hat{v}$ is tangent to a geodesic in $M^n$, the geodesic equation states that $\nabla^{M}_{\hat v} \hat{v} = 0$, where $\nabla^{M}$ is the restriction of $\nabla$ to $M^n$. As a result, $\nabla_{\hat{v}} \hat{v}$ must be in a direction normal to $M^n$ at each point of $\gamma$. The covariant derivative $\nabla_{\hv} \hv$ is the rate of change of $\hv$ with respect to the arc-length parameter $s$ along $\gamma$, so
	\begin{equation}
		\nabla_{\hat{v}} \hat{v}
		= \frac{d \hat{v}}{d s}
		= \lim_{\Delta s \rightarrow 0} \frac{\Delta \psi \, \hat{n}}{\Delta s}
		= \frac{d \psi}{d s} \hat{n}
		,
	\end{equation}
	where the third term above assumes that $\Delta \psi$ is the angular difference between the tangent vector at a point, and the parallel transport of the tangent vector a distance of $\Delta s$ along $\gamma$. The resulting angular change in the tangent vector also works in the same way as a curve in $\euc{2}$, as shown in equation (\ref{eq:kappaE2}). The sign of the derivative is considered positive if the change is in the positive $\hat{n}$ direction, and negative otherwise. The integral of the curvature along $\gamma$ then becomes
	\begin{equation}
		\int_\gamma \kappa(\hat{v}) \, d s
		= \int_\gamma \left<
		\nabla_{\hat{v}} \hat{v}, \, \hat{n} \right> \, d s
		= \int_\gamma \left<
		\frac{d \psi}{d s} \, \hat{n}, \, \hat{n} \right> \, d s
		= \int_\gamma \frac{d \psi}{d s} \, d s
		= \int_\gamma d \psi
		,
	\end{equation}
	giving the total angular change in the tangent vector $\hat{v}$ along the geodesic segment $\gamma$.
\end{proof}

\begin{definition}[Geodesic-tangent curvature integral]
	\label{def:geoInt}
	For both a smooth manifold $M^n \subset N^{n + 1}$, and a piecewise flat approximation $S^n$ of $M^n$, the geodesic-tangent curvature integral associated with a geodesic segment $\gamma$ is defined as the total angular change in the tangent vector to $\gamma$ along its length, and denoted $\ik_\gamma$.
\end{definition}

\

If these are to be used to determine the average curvature value over an $n$-volume region, the corresponding $n$-volume integrals must be defined in terms of curvature path integrals instead of the curvature at a point.

\begin{lemma}[Smooth directed curvature integrals]
	\label{lem:SmoothDirectedInt}
	For an $n$-dimensional region $B$ of a smooth manifold $M^n \subset N^{n + 1}$, with a vector field $\hv$ and integral curves $c$ to $\hv$ that foliate $B$, the integral of the curvature in the direction of $\hv$ over $B$,
	\begin{equation*}
		\int_B \kappa(\hv) \ dV^n \
		= \ \int_{B/c} \left[ \int_c \kappa (\hv) ds \right] d V^{n - 1}.
	\end{equation*}
\end{lemma}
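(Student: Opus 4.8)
The plan is to recognize the claimed identity as a Fubini-type decomposition of the Riemannian volume integral adapted to the foliation of $B$ by the integral curves $c$. First I would set up coordinates tailored to the foliation: label each leaf by a parameter $y = (y^1, \dots, y^{n-1})$ ranging over the leaf space $B/c$, and label position along each leaf by the arc-length $s$ of the corresponding integral curve. This produces a parametrization $\Phi(s, y)$ of $B$ in which $\partial_s \Phi = \hv$. The crucial structural input is that $\hv$ is a \emph{unit} vector field, so that $g_{ss} = \left< \partial_s, \partial_s \right> = \left< \hv, \hv \right> = 1$; this is precisely what guarantees that the natural longitudinal length element is exactly the arc-length $ds$ appearing in the inner integral, with no additional speed factor.

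With these coordinates the Riemannian volume element is $dV^n = \sqrt{\det g}\, ds\, dy^1 \cdots dy^{n-1}$. The next step is to show that this factors as $dV^n = ds\, dV^{n-1}$, where $dV^{n-1}$ is the transverse $(n-1)$-volume carried on $B/c$. I would obtain this by the disintegration of the volume measure along the foliation (equivalently, the coarea formula): the projection $\pi : B \to B/c$ pushes $dV^n$ forward to a measure on $B/c$, and $dV^n$ decomposes into arc-length along the fibres of $\pi$ times this transverse measure, which I identify as $dV^{n-1}$. When the leaves are geodesics, which is the case of interest here and the setting in which Lemma \ref{lem:SmoothGeoInt} applies, one may instead choose the transversals orthogonally, as in Gaussian or Fermi normal coordinates, so that $g_{s i} = 0$ and the splitting of $\sqrt{\det g}$ into its longitudinal and transverse parts is immediate.

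Once the product decomposition $dV^n = ds\, dV^{n-1}$ is in hand, the identity follows from Fubini's theorem: integrating $\kappa(\hv)$ over $B$ equals the iterated integral obtained by first integrating along each leaf in $s$, giving the geodesic-tangent path integral $\int_c \kappa(\hv)\, ds$, and then integrating the result over the leaf space $B/c$ against $dV^{n-1}$. This reproduces the claimed formula, and it simultaneously sets up the intended reading of the inner integral, via Lemma \ref{lem:SmoothGeoInt}, as a total angular change of $\hv$ along each leaf.

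I expect the main obstacle to be the second step, namely rigorously justifying the product decomposition $dV^n = ds\, dV^{n-1}$ and pinning down what $dV^{n-1}$ means as a measure on the quotient $B/c$. In general a congruence of curves focuses and defocuses, so the transverse volume carried along a leaf need not be constant in $s$; the clean factorization therefore relies on interpreting $dV^{n-1}$ as the disintegrated transverse measure on the leaf space rather than the induced area of any single cross-sectional slice. Making this identification precise, and confirming that the unit-speed normalization of $\hv$ is exactly what renders the longitudinal factor equal to $ds$, is the heart of the argument; the remaining steps are the standard Fubini and coarea machinery.
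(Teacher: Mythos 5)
Your proposal is correct and follows essentially the same route as the paper: the paper's entire proof is a single sentence asserting that, since $B$ is foliated by the curves $c$, the integral can be performed iteratively --- first along the curves, then over the orthogonal space $B/c$ --- which is precisely your Fubini argument in foliation-adapted coordinates, spelled out. It is worth noting that the subtlety you flag at the end (a focusing or defocusing congruence obstructs the naive factorization $dV^n = ds \, dV^{n-1}$, so the transverse measure on $B/c$ needs a careful interpretation) is genuine and is not addressed by the paper at all; it is harmless in the paper's intended applications, where the regions are intrinsically flat and foliated by parallel line segments so the transverse volume is constant along each leaf, but your disintegration caveat is exactly what a fully rigorous statement of the lemma for a general vector field would have to pin down.
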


\begin{proof}
	Since $B$ is foliated by the curves $c$, the $n$-volume integral of $\kappa (\hv)$ over $B$ can be performed iteratively, with the integrals along the curves found first, and the results of these then integrated over the space $B/c$ orthogonal to $c$ in $B$.
\end{proof}

\begin{definition}[$n$-volume directed curvature integral]
	\label{def:DirectedInt}
	Take an $n$-dimensional region $B$ of either a smooth manifold $M^n \subset N^{n + 1}$, or a piecewise flat approximation $S^n$ of $M^n$, with a vector field $\hv$ and integral curves $c$ to $\hv$ that foliate $B$. The directed curvature integral over $B$, in the direction of $\hv$, is defined as the $(n - 1)$-dimensional integral of the curvature path integrals along and tangent to the curves $c$, denoted $\ik_c (\hv)$, where they are defined,
	\begin{equation*}
		\ik_B (\hv) 
		= \int_{B/c} \ik_c (\hv) d V^{n - 1},
	\end{equation*}
	with $B/c$ representing the space orthogonal to the curves $c$ in $B$.
\end{definition}

\

On a smooth manifold, an orthonormal basis of vector fields can be constructed on a neigbourhood of any differential manifold up to dimension three, \cite{DiagMetric}. On such a neighbourhood, an $n$-volume integral of the mean curvature can be given as a sum of directed curvature integrals, as shown in Lemma \ref{lem:SmoothMeanInt} below.

\begin{lemma}[Smooth mean curvature integrals]
	\label{lem:SmoothMeanInt}
	For a region $B \subset M^n \subset N^{n + 1}$, which can be decomposed into subregions $D_j$ on which orthonormal sets of basis vector fields $\{\he_i\}$ can be defined, the integral of the mean curvature over $B$,
	\begin{equation*}
		\int_B H \ dV \
		= \ \frac{1}{n} \sum_j \sum_i \int_{D_j} \kappa (\he_i) d V^n
		= \ \frac{1}{n} \sum_j  \sum_{i = 1}^n \ik_{D_j} (\he_i).
	\end{equation*}
\end{lemma}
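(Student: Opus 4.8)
The plan is to prove both equalities by elementary steps, drawing only on the additivity of integration over the decomposition $\{D_j\}$, the pointwise definition of the mean curvature in (\ref{Hsmooth}), and the directed curvature integral machinery of Lemma \ref{lem:SmoothDirectedInt} together with Definition \ref{def:DirectedInt}. No new geometric content is required; the lemma is essentially a bookkeeping statement assembling a mean curvature integral out of the directed curvature integrals taken along the frame directions on each subregion.

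For the first equality, I would start from $\int_B H \, dV = \sum_j \int_{D_j} H \, dV$, valid because the $D_j$ cover $B$ and, by hypothesis, meet only along their lower-dimensional boundaries, so the overlaps carry zero $n$-volume. On each $D_j$ the orthonormal frame $\{\he_i\}$ is available, so the pointwise identity (\ref{Hsmooth}) gives $H = \tfrac{1}{n} \sum_{i=1}^n \kappa(\he_i)$ throughout $D_j$. As this is a finite sum, linearity of the integral pulls it outside to yield $\int_{D_j} H \, dV = \tfrac{1}{n} \sum_i \int_{D_j} \kappa(\he_i) \, dV^n$, and summing over $j$ gives the first expression.

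For the second equality, I would handle each term $\int_{D_j} \kappa(\he_i) \, dV^n$ in turn. With $j$ and $i$ fixed, the integral curves $c$ of the single frame field $\he_i$ foliate $D_j$, so Lemma \ref{lem:SmoothDirectedInt} rewrites the $n$-volume integral as the iterated integral $\int_{D_j/c} \bigl[ \int_c \kappa(\he_i) \, ds \bigr] \, dV^{n-1}$. The inner path integral is, by definition, the curvature path integral $\ik_c(\he_i)$ along and tangent to $c$, and the outer integral over $D_j/c$ is precisely the directed curvature integral $\ik_{D_j}(\he_i)$ of Definition \ref{def:DirectedInt}. Replacing each $\int_{D_j} \kappa(\he_i) \, dV^n$ by $\ik_{D_j}(\he_i)$ in the first expression produces the second.

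The substantive checks all live in the first step. I would need the decomposition to be exact, so that the subregions overlap only on sets of $n$-volume zero and additivity introduces no error, and I would need each frame field $\he_i$ to generate a genuine foliation of $D_j$ by its integral curves so that Lemma \ref{lem:SmoothDirectedInt} applies. The existence of the orthonormal frames is the only real restriction, guaranteed just up to dimension three by \cite{DiagMetric}, but this is built into the hypothesis rather than something to establish. Finally, since Definition \ref{def:DirectedInt} (and the underlying angular-change reinterpretation of the path integral in Definition \ref{def:geoInt}) is stated for a piecewise flat $S^n$ as well, the second equality transfers to $S^n$ by definition once the smooth iterated-integral identity is in place.
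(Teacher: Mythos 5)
Your proposal is correct and follows essentially the same route as the paper's proof: decompose the integral over the subregions $D_j$, apply the pointwise identity (\ref{Hsmooth}) and linearity to exchange the sum over frame directions with the integral, and then identify each $\int_{D_j} \kappa(\he_i)\, dV^n$ with $\ik_{D_j}(\he_i)$ via Lemma \ref{lem:SmoothDirectedInt} and Definition \ref{def:DirectedInt}. The only cosmetic difference is that you justify summing over the $D_j$ by measure-theoretic additivity, while the paper appeals to the basis-independence of $H$ as a scalar field; both settle the same point.
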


\begin{proof}
	On each subregion $D_j$, since the vector fields $\he_i$ are all orthogonal to each other over $D_j$, the sum over the basis vector fields and the integral over $D_j$ commute, so
	\begin{equation*}
		\int_{D_j} H d V^n 
		= \int_{D_j} \frac{1}{n} \sum_{i = 1}^n \kappa (\he_i) d V^n
		= \frac{1}{n} \sum_{i = 1}^n \int_{D_j} \kappa (\he_i) d V^n
		= \frac{1}{n} \sum_{i = 1}^n \ik_{D_j} (\he_i).
	\end{equation*}
	with the final term coming from definition \ref{def:DirectedInt}, since $D_j$ is foliated by the integral curves of each coordinate vector field. As a scalar field, the mean curvature is invariant to the choice of basis vector fields, so the total mean curvature over $B$ is just the sum of the integrals for each subregion $D_j$.
\end{proof}

\begin{definition}[$n$-volume mean curvature integral]
	\label{def:MeanInt}
	For an $n$-dimensional region $B$ of either a smooth manifold $M^n \subset N^{n + 1}$, or a piecewise flat approximation $S^n$ of $M^n$, which can be decomposed into regions $D_j$ on which orthonormal sets of basis vector fields $\{\he_i\}$ can be defined, the integral of the mean curvature over $B$,
	\begin{equation}
		\iH_B
		= \ \frac{1}{n} \sum_j  \sum_{i = 1}^n \ik_{D_j} (\he_i).
	\end{equation}
\end{definition}

\

\subsection{Piecewise flat geodesic-tangent curvature integral}
\label{sec:GeoTangent}

While definition \ref{def:geoInt} above can be used to give the hinge angle $\phi_h$ on a piecewise flat manifold when the path $\gamma$ is orthogonal to the hinge $h$, it will prove more useful to find the curvature integral for geodesic paths that intercept hinges at different angles.

\begin{theorem}[Geodesic-tangent curvature integral on a piecewise flat manifold]
	\label{thm:geoIntS}
	For a path $\gamma$ that is intrinsically straight within a piecewise flat approximation $S^n$ of a smooth manifold $M^n \subset N^{n + 1}$, the geodesics-tangent curvature integral
	\begin{equation} 
		\ik_\gamma
		= \sum_h \cos \theta_h \ \phi_h + O(\phi_h^3) ,
	\end{equation}
	for all hinges $h$ that intersect $\gamma$, with $\phi_h$ giving the corresponding hinge angle, and $\theta_h$ the angle that $\gamma$ makes with the normal vector to the hinge $h$ within $S^n$.
\end{theorem}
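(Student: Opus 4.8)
The plan is to read off $\ik_\gamma$ directly from Definition \ref{def:geoInt}: it is the total angular change of the unit tangent vector $\hat v$ as we traverse $\gamma$. Because $\gamma$ is intrinsically straight, it is an ordinary straight segment inside each flat $n$-simplex it passes through, so $\hat v$ is constant (in the local embedding) between hinge crossings, and the angular change accumulates only at the hinges. Hence $\ik_\gamma = \sum_h \Delta\psi_h$, where $\Delta\psi_h$ is the angular jump in $\hat v$ as $\gamma$ crosses hinge $h$, and the problem reduces to evaluating a single crossing.

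For one hinge I would work in the local Euclidean neighbourhood on either side of $S^n$, where the two adjacent $n$-simplices meet along the $(n-1)$-dimensional hinge at the angle $\phi_h$; this two-simplex configuration is flat in $\euc{n+1}$ whether or not $M^n$ is itself Euclidean. Choose coordinates so that the hinge spans $e_1, \dots, e_{n-1}$, the first simplex lies in the coordinate hyperplane spanned by these and the in-surface hinge-normal direction, and the second simplex is obtained by folding about the hinge axis through the angle $\phi_h$, a rotation acting in the two remaining coordinate directions. Since $\gamma$ is intrinsically straight, the far-side tangent vector is precisely the image of the near-side tangent $\hat v$ under this folding rotation: its components along the hinge are preserved, while its component $\hat v \cdot \hat m$ along the unit normal $\hat m$ to the hinge within $S^n$ is rotated partly out of the surface. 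Writing $\hat v \cdot \hat m = \cos\theta_h$ by the definition of $\theta_h$, a short inner-product computation gives the exact relation $\cos(\Delta\psi_h) = 1 - \cos^2\theta_h\,(1 - \cos\phi_h)$.

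Finally I would Taylor-expand this relation in $\phi_h$. Using $1 - \cos\phi_h = \tfrac12\phi_h^2 + O(\phi_h^4)$ and inverting the cosine, the quadratic terms match to leave $\Delta\psi_h = \cos\theta_h\,\phi_h + O(\phi_h^3)$ (one checks the cubic coefficient is $-\tfrac{1}{24}\cos\theta_h(1-\cos^2\theta_h)$, confirming the error is genuinely of order $\phi_h^3$ and not lower). Summing over all hinges that intersect $\gamma$ yields the stated formula, with the sign of each term fixed by the orientation convention so that a crossing in the positive $\hat n$ sense contributes positively, consistent with the concave/convex sign rule for $\phi_h$.

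I expect the crux to be the middle step: justifying cleanly that, for an intrinsically straight path, the far-side tangent is exactly the folding-rotation image of the near-side one, and that the hinge-parallel components genuinely drop out so that only the projection $\cos\theta_h$ survives. This demands care with the local frames on the two simplices and with orientation and sign bookkeeping; once it is secured, the trigonometric estimate and the summation are routine.
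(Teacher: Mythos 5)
Your proposal is correct and follows essentially the same route as the paper: both localize the angular change to the hinge crossings, model a crossing as the folding rotation through $\phi_h$ that preserves the hinge-parallel component of the tangent vector, and Taylor-expand the resulting exact angle relation before summing over hinges. The only cosmetic difference is that you extract $\Delta\psi_h$ from a dot product, giving $\cos\Delta\psi_h = 1 - \cos^2\theta_h\,(1-\cos\phi_h)$ --- equivalent to the half-angle identity $\sin(\Delta\psi_h/2) = \cos\theta_h \sin(\phi_h/2)$ that the paper notes via its figure --- whereas the paper computes $\sin\psi_h$ from a cross product in an adapted basis; both give the same cubic coefficient $-\tfrac{1}{24}\cos\theta_h\sin^2\theta_h$, so the error estimate matches.
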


\begin{proof}
	For a piecewise flat manifold $S^n \subset \euc{n + 1}$, the unit tangent vector $\hat v$ to $\gamma$ will remain constant within each $n$-simplex, so any changes will occur at the hinges. For each $n$-simplex $\sigma^n$ containing a hinge $h$ that intersects $\gamma$, a set of coordinate vectors $\{\hat w_i\}$ can be defined so that $\hat w_1$ is orthogonal to $h$ and tangent to $\sigma^n$, and $\hat w_2$ is aligned with the component of $\hat v$ tangent to $h$, so that
	\begin{equation}
		\hat v
		= \cos \theta_h \ \hat w_1
		+ \sin \theta_h \ \hat w_2.
	\end{equation}
	Taking two $n$-simplices $\sigma^n_A$ and $\sigma^n_B$ joined by the hinge $h$, their corresponding vectors $\hat w_1^A$ and $\hat w_1^B$ will differ by the hinge angle $\phi_h$.  
	To compare the tangent vector $\hat v$ on either side of $h$, a new set of coordinate vectors $\{\hat e_i\}$ can be defined, with 
	\begin{align}
		\hat e_1 
		&= \hat w_1^B - \hat w_1^A/
		\left|\hat w_1^B - \hat w_1^A\right|
		, \nonumber \\
		\hat e_2 
		&= \hat w_1^A + \hat w_1^B/
		\left|\hat w_1^A + \hat w_1^B\right|
		, \nonumber \\
		\hat e_3 
		&= \hat w_2	
		.
	\end{align}
	In this bases, the unit tangent vectors to $\gamma$ within $\sigma^n_A$ and $\sigma^n_B$ are
	\begin{align}
		\hat v_A 
		=
		- \sin \frac{\phi_h}{2} \, \cos \theta_h \ \hat e_1 +
		  \cos \frac{\phi_h}{2} \, \cos \theta_h \ \hat e_2 +
		  \sin \theta_h \ \hat e_3
		, \nonumber \\
		\hat v_B 
		= \ \,
		\ \sin \frac{\phi_h}{2} \, \cos \theta_h \ \hat e_1 +
		  \cos \frac{\phi_h}{2} \, \cos \theta_h \ \hat e_2 +
		  \sin \theta_h \ \hat e_3
		.
	\end{align}
	The angular change $\psi_h$ from $\hat v_A$ to $\hat v_B$ can now be found using the cross product of these,
	\begin{align}
		\sin \psi_h
		&= \left|\hat v_A \times \hat v_B \right|
		\nonumber \\
		&= \left|
		2 \sin \frac{\phi_h}{2} \, \cos \theta_h \, \sin \theta_h \ 
		\hat e_2 
		- 2 \sin \frac{\phi_h}{2} \, \cos \frac{\phi_h}{2} \, \cos^2 \theta_h \
		\hat e_3
		\right|
		\nonumber \\
		&= 2 \sin \frac{\phi_h}{2} \, \cos \theta_h \
		\sqrt{\sin^2 \theta_h
			+ \cos^2 \frac{\phi_h}{2} \, \cos^2 \theta_h}
		.
	\end{align}
	Solving for $\psi_h$, and Taylor expanding the result in terms of $\phi_h$ about zero,
	\begin{align}
		\psi_h
		&= \cos \theta_h \ \phi_h
		- \frac{1}{24} \, \cos \theta_h \, \sin^2 \theta_h \ \phi_h^3
		+ O(\phi_h^5).
		\nonumber \\
		&= \cos \theta_h \ \phi_h
		+ O(\phi_h^3).
		\label{eq:psih}
	\end{align}
	Figure \ref{fig:PFpath} also shows that $\sin \left(\frac{1}{2}\psi_h\right) = \cos \theta_h \,  \sin \left(\frac{1}{2} \phi_h\right)$, which requires nothing more than the small angle approximation for sine to give the leading term above.
	
	\
	
	\begin{figure}[h]
		\centering
		\includegraphics[scale=1]{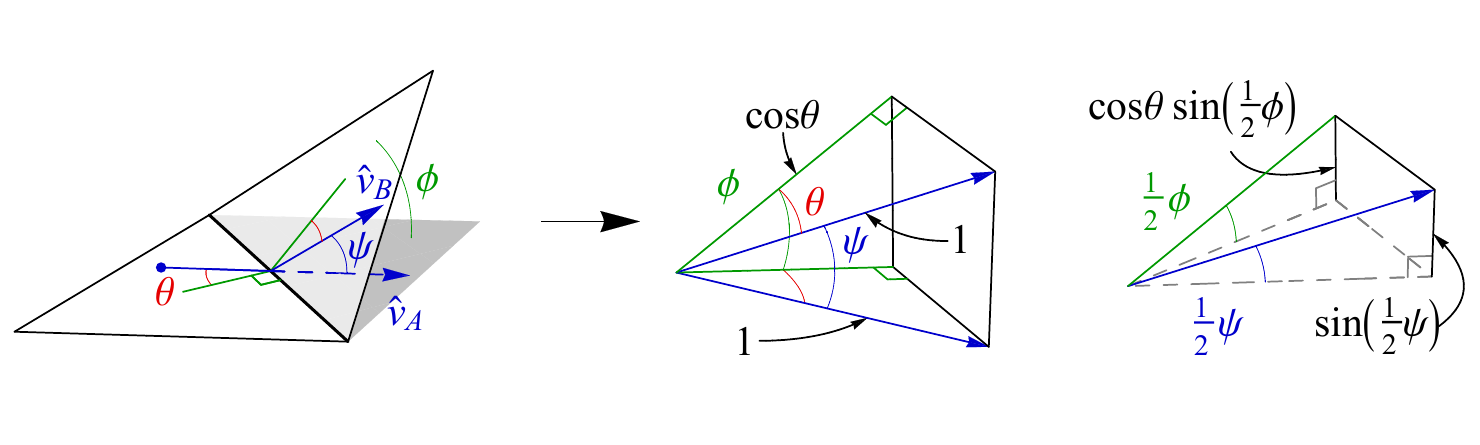}
		\caption{The two triangles on either side of a hinge are shown on the left, with a geodesic path $\gamma$ and the unit tangent vectors $\hv_A$ and $\hv_B$ to $\gamma$ for each triangle. The angle $\psi$ between the two tangent vectors can then be related to the hinge angle $\phi$ and the angle $\theta$.}
		\label{fig:PFpath}
	\end{figure}

	When $S^n$ is a piecewise flat approximation of a manifold $M^n \subset N^{n + 1}$, and the hinge angles are determined by extending $S^n$ to produce an $(n + 1)$-dimensional piecewise flat approximation for the region around $M^n$, as described in section \ref{sec:PFnonE}, the same procedure can be used for the flat spaces on either side of $S^n$. Averaging the resulting angles also gives (\ref{eq:psih}) in terms of the averaged hinge angle $\phi_h$.
	
	Summing the angular change $\psi_h$ for each hinge $h$ intersecting $\gamma$ gives the total angular change in $\hat v$ along the length of $\gamma$,
	\begin{equation}
		\ik_\gamma
		= \sum_h \psi_h
		= \sum_h \cos \theta_h \ \phi_h + O(\phi_h^3)
		,
	\end{equation}
	showing the geodesic-tangent curvature integral as a weighted sum of hinge angles.
\end{proof}

\section{New curvature constructions}
\label{sec:New}

\subsection{Piecewise flat mean curvature}
\label{sec:Hv}

The mean curvature has a single value at each point of a smooth manifold $M^n \subset N^{n + 1}$, and can be seen as an average of the directional curvature over \emph{all} directions on $M^n$. To construct an approximation of the smooth mean curvature on a peicewise flat manifold $S^n$, the $n$-volume regions should have the following characteristics:
\begin{itemize}
	\item the regions should tessellate $S^n$;
	
	\item each region should intercept hinges with a wide distribution of orientations;
	
	\item the resulting curvature constructions should change continuously for small variations in the triangulation $S^n$.
\end{itemize}
A tessellation ensures that each point in $S^n$ has a single mean curvature associated with it, a wide distribution of hinges can help to approximate an average curvature over all directions, and the final characteristic ensures that there are no jumps in a curvature value for small changes in the triangulation. 

The graph elements associated with the most general hinge orientations are the $n$-simplices and vertices. However, while the $n$-simplices provide a natural tessellation of $S^n$, they only contain hinges at their boundaries and will generally have a smaller sampling of hinges than the vertices. Thus, a dual tessellation, which decomposes $S^n$ into $n$-volume regions surrounding each vertex, will be used to construct approximations of the mean curvature.

\begin{definition}[Vertex regions]
	\label{def:V_v}
	A decomposition of a piecewise flat manifold $S^n$ into $n$-dimensional regions $V_v$ dual to each vertex $v$ is defined so that:
	\begin{enumerate}
		\item the vertex $v \in V_v$, but no other vertices are contained within $V_v$;
		
		\item the regions $V_v$ form a complete tessellation of $S^n$,
		\begin{equation}
			|S^n| = \sum_{v \in S^n} |V_v|, \qquad
			V_{v_i} \cap V_{v_j} = \emptyset \quad
			\forall \ i \neq j ,
		\end{equation}
		with $|S^n|$ and $|V_v|$ representing the $n$-volumes of $S^n$ and $V_v$ respectively;
		
		\item only hinges $h_v$ in the star of $v$ (containing $v$ in their boundary) intersect $V_v$.
	\end{enumerate}
\end{definition}

This gives the most general definition of a dual tessellation for the constructions that follow. The third property is not strictly necessary but is deemed a reasonable requirement, with Voronoi tessellations satisfying this condition only where $S^n$ gives a Delaunay triangulation for example. It also seems reasonable to require an additional property which distributes the total $n$-volume of $S^n$ in some consistent manner over the subregions $V_v$. However, there are many different methods for doing this, most of which are incompatible with one another, such as the Voronoi and barycentric tessellations. Such a property will therefore not be imposed here.

\begin{theorem}[Piecewise flat mean curvature]
	\label{thm:H_v}
	Assuming the curvature integrals in definitions \ref{def:geoInt}, \ref{def:DirectedInt} and \ref{def:MeanInt}, the average mean curvature over the region $V_v$ dual to a vertex $v$ on a piecewise flat manifold $S^n$, 
	\begin{equation}
		\label{H_v}
		H_v
		:= \frac{1}{|V_v|} \, \iH_{V_v}
		= \frac{1}{n \, |V_v|}
		\sum_{h \subset \mathrm{star}(v)} |h \cap V_v| \phi_h ,
	\end{equation}
	with $|h \cap V_v|$ representing the $(n - 1)$-volume of $h \cap V_v$, and $\mathrm{star}(v)$ representing all of the hinges in the star of $v$, or containing $v$ in their boundary.
\end{theorem}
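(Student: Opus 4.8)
The plan is to unfold the definition $H_v = \frac{1}{|V_v|}\iH_{V_v}$ through Definitions~\ref{def:MeanInt} and \ref{def:DirectedInt} down to the level of the geodesic-tangent integrals controlled by Theorem~\ref{thm:geoIntS}, and then to reassemble the pieces by a co-area (Fubini) argument combined with a completeness relation for the orthonormal frame. First I would decompose $V_v$ into the subregions $D_j$ carrying orthonormal tangent frames $\{\he_i\}$ guaranteed to exist in low dimension, and write $\iH_{V_v} = \frac{1}{n}\sum_j\sum_{i=1}^n \ik_{D_j}(\he_i)$ by Definition~\ref{def:MeanInt}. Each directed integral then expands, via Definition~\ref{def:DirectedInt}, as $\ik_{D_j}(\he_i) = \int_{D_j/c}\ik_c(\he_i)\,dV^{n-1}$, where the curves $c$ are the straight segments through $D_j$ in the direction $\he_i$, and each $\ik_c(\he_i)$ is given by Theorem~\ref{thm:geoIntS} as $\sum_h \cos\theta_h\,\phi_h$ summed over the hinges $h$ that the segment crosses.

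The central manipulation is to exchange the order of the integral over the foliation and the sum over hinges. For a fixed hinge $h$ crossing $D_j$, every segment $c$ in the direction $\he_i$ meets $h$ at the same angle $\theta_h$, so $\cos\theta_h$ and $\phi_h$ are constant over the family of segments that hit $h$. The weight attached to $h$ is therefore $\cos\theta_h\,\phi_h$ times the measure of those segments in $D_j/c$ that actually intersect $h$. That measure is precisely the $(n-1)$-volume of the projection of $h\cap D_j$ onto the hyperplane orthogonal to $\he_i$, namely $|h\cap D_j|\,|\cos\theta_h|$, since the hinge has unit normal making angle $\theta_h$ with $\he_i$. Combining the two factors gives $\ik_{D_j}(\he_i) = \sum_{h\subset D_j} |h\cap D_j|\,\cos^2\theta_h\,\phi_h$, to the order retained in Theorem~\ref{thm:geoIntS}.

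Summing over the $n$ frame directions then invokes the completeness of the orthonormal basis: because the hinge normal $\hat n_h$ is a unit vector tangent to $S^n$ and lies in the span of $\{\he_i\}$, Parseval's identity gives $\sum_{i=1}^n \cos^2\theta_h = \sum_{i=1}^n(\he_i\cdot\hat n_h)^2 = 1$, so all dependence on the choice of frame cancels and $\sum_{i=1}^n \ik_{D_j}(\he_i) = \sum_{h\subset D_j}|h\cap D_j|\,\phi_h$. Summing over the subregions $D_j$ reassembles the hinge pieces, $\sum_j |h\cap D_j| = |h\cap V_v|$, while property~3 of Definition~\ref{def:V_v} restricts the hinges meeting $V_v$ to those in $\mathrm{star}(v)$. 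Hence $\iH_{V_v} = \frac{1}{n}\sum_{h\subset\mathrm{star}(v)}|h\cap V_v|\,\phi_h$, and dividing by $|V_v|$ yields (\ref{H_v}).

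The main obstacle is the projected-measure identity together with its sign bookkeeping: I must verify carefully that the $(n-1)$-measure of foliating segments meeting a hinge equals the cosine-projected hinge volume $|h\cap D_j|\,|\cos\theta_h|$, and that the orientation sign carried by $\cos\theta_h$ in the integrand combines with the positive projection factor to give $\cos^2\theta_h$ consistently with the convention fixing the sign of $\phi_h$. A secondary point is ensuring the Parseval step is legitimate, i.e. that $\hat n_h$ is genuinely tangent to $S^n$ at the hinge and expressible in the frame $\{\he_i\}$, and that the decomposition into $D_j$ is compatible with the straight-line foliations crossing simplex boundaries. Finally, the $O(\phi_h^3)$ remainder from Theorem~\ref{thm:geoIntS} must be tracked and either absorbed into the definition of the curvature integrals or acknowledged as the order to which the stated equality holds.
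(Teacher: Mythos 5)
Your proposal takes a genuinely different route from the paper, and the difference is exactly where it breaks. You expand $\iH_{V_v}$ over a generic orthonormal frame on each subregion (essentially re-deriving the directed-curvature formula of Theorem \ref{thm:kappa_h} in every direction $\he_i$) and then cancel the frame dependence with the completeness identity $\sum_{i=1}^n(\he_i\cdot\hat{n}_h)^2=1$. The paper instead decomposes $V_v$ into one subregion $D_h$ per hinge, lying in the two flat simplices sharing $h$, and \emph{adapts} the frame to that hinge by taking $\he_1$ orthogonal to $h$: then every integral curve except $\gamma_1$ stays inside a single simplex, only one directed integral survives, and it survives \emph{exactly}, because at $\theta_h=0$ the relation $\sin\psi_h = 2\sin\frac{\phi_h}{2}\cos\theta_h\sqrt{\sin^2\theta_h+\cos^2\frac{\phi_h}{2}\cos^2\theta_h}$ collapses to $\sin\psi_h=\sin\phi_h$, i.e.\ $\psi_h=\phi_h$ with no remainder. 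Your Parseval step only removes the frame dependence of the \emph{leading} term. Carrying the expansion $\psi_h=\cos\theta\,\phi_h-\frac{1}{24}\cos\theta\sin^2\theta\,\phi_h^3+O(\phi_h^5)$ through your projected-measure identity, direction $\he_i$ contributes $|h\cap D_j|\bigl(\cos^2\theta_i\,\phi_h-\frac{1}{24}\cos^2\theta_i\sin^2\theta_i\,\phi_h^3+\cdots\bigr)$; summing over $i$ gives $\sum_i\cos^2\theta_i=1$ for the linear term but $\frac{1}{24}\bigl(1-\sum_i\cos^4\theta_i\bigr)$ for the cubic coefficient, which is not zero for a generic frame (e.g.\ $n=2$, $\theta_1=\theta_2=45^\circ$ gives $\frac{1}{48}\phi_h^3$). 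So as written your argument proves (\ref{H_v}) only up to $O(\phi_h^3)$, strictly weaker than the theorem's exact equality, and the loose end you flag at the close cannot be "absorbed": the fix is precisely the adapted frame, for which the error vanishes identically.

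A secondary gap is the existence of your frames. The "guaranteed to exist in low dimension" result cited in the paper is a statement about smooth manifolds; on $S^n$ a Cartesian frame whose integral curves are straight segments exists on $D_j$ only if $D_j$ is intrinsically flat, i.e.\ contains no codimension-two simplices (in particular not the vertex $v$ itself, where the deficit angle sits) in its interior. Making that precise forces essentially the paper's decomposition into per-hinge subregions. It is also worth noting what your computation inadvertently reveals: the value assigned by Definition \ref{def:MeanInt} is frame-dependent at cubic order, so the exact equality of Theorem \ref{thm:H_v} relies on the canonical hinge-adapted choice of decomposition and frame, which is the choice the paper's proof makes.
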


\begin{proof}
	The vertex region $V_v$ can be decomposed into subregions $D_h$, each enclosing the intersection of a single hinge $h$ with $V_v$. Since each of these regions is composed of a pair of $n$-simplices, the subregions $D_h$ must be intrinsically flat within $S^n$. A Cartesian coordinate basis $\{ \he_i \}$ can therefore be defined on each subregion, with integral curves $\gamma_i$ given by line-segments within $S^n$. By definitions \ref{def:MeanInt} and \ref{def:DirectedInt}, the mean curvature integral over each subregion,
	\begin{equation}
		\mathbb{H}_{D_h}
		= \frac{1}{n} \sum_{i = 1}^n \ik_{D_h} (\he_i)
		= \frac{1}{n} \sum_{i = 1}^n \int_{{D_h}/\gamma_i} \ik_{\gamma_i} \ d V^{n - 1}
		.
	\end{equation}
	Choosing the coordinate vector $\he_1$ to be orthogonal to the hinge $h$, all of the line-segments except $\gamma_1$ will remain within a single $n$-simplex, and only the geodesic-tangent curvature integral along $\gamma_1$ will be non-zero, so by theorem \ref{thm:geoIntS},
	\begin{equation}
		\mathbb{H}_{D_h}
		= \frac{1}{n} \ \int_{{D_h}/\gamma_1} \ik_{\gamma_1} \ d V^{n - 1}
		= \frac{1}{n} \ \int_{{D_h}/\gamma_1} \phi_h \ d V^{n - 1}
		= \frac{1}{n} \ |h \cap V_v| \phi_h ,
	\end{equation}
	with the final integral resulting in the $(n-1)$-volume of the intersection of $h$ with $D_h$, since this is the space orthogonal to the line-segments $\gamma_1$ within $D_h$, and the hinge angle $\phi_h$ is invariant over $h$. The total integrated curvature over $V_v$ is given by summing over all of the subregions $D_h$,
	\begin{equation}
		\mathbb{H}_{V_v}
		= \sum_{h \subset \mathrm{star}(v)}
		\mathbb{H}_{D_h}
		= \sum_{h \subset \mathrm{star}(v)} \frac{1}{n} |h \cap V_v| \phi_h ,
	\end{equation}
	and the average mean curvature found by dividing this by the volume of $V_v$.
\end{proof}

For continuous variations in the triangulation $S^n$ of a smooth manifold $M^n$, where the simplicial graph remains fixed and the triangulation well-defined, the hinge angles and edge lengths will change continuously. For a barycentric dual tessellation, the $n$-volumes $|V_v|$ will also change continuously, and therefore so will the mean curvature $H_v$. For a Voronoi dual, the triangulation must also remain Delaunay. 

The construction also gives the same total mean curvature expression as that of Steiner \cite{Steiner} and Hartle and Sorkin \cite{HS81}, as shown below.

\begin{corollary}[Total mean curvature]
	\label{cor:IntH}
	The total mean curvature over a piecewise flat manifold $S^n$ is
	\begin{equation}
		\iH_{S^n}
		= \frac{1}{n} \sum_{h \subset S^n} |h| \phi_h .
		\label{TotalH}
	\end{equation}
\end{corollary}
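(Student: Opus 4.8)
The plan is to obtain the total mean curvature by summing the per-vertex result of Theorem~\ref{thm:H_v} over all vertices of $S^n$, and then re-organising the resulting double sum by hinge rather than by vertex. First I would invoke the tessellation property of Definition~\ref{def:V_v} to write the total mean curvature integral as the sum of the integrals over the vertex regions,
\[
\iH_{S^n} = \sum_{v \in S^n} \iH_{V_v},
\]
which is legitimate because the $V_v$ partition $S^n$ (property 2) and the mean curvature integral of Definition~\ref{def:MeanInt} is additive over a decomposition into subregions. Substituting the expression for $\iH_{V_v}$ already established in the proof of Theorem~\ref{thm:H_v} gives
\[
\iH_{S^n} = \frac{1}{n} \sum_{v \in S^n} \sum_{h \subset \mathrm{star}(v)} |h \cap V_v| \, \phi_h.
\]

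Next I would interchange the order of summation. Property 3 of Definition~\ref{def:V_v} guarantees that $h \cap V_v = \emptyset$ whenever $h \not\subset \mathrm{star}(v)$, so the only vertices contributing a nonzero term for a fixed hinge $h$ are those lying in its boundary. Regrouping the double sum by hinge, and pulling out the hinge angle $\phi_h$ since it is constant over $h$, yields
\[
\iH_{S^n} = \frac{1}{n} \sum_{h \subset S^n} \left( \sum_{v \,:\, h \subset \mathrm{star}(v)} |h \cap V_v| \right) \phi_h.
\]

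The key step, and the point I expect to require the most care, is showing that for each fixed hinge $h$ the inner sum collapses to the full $(n-1)$-volume $|h|$. This follows from the fact that the regions $V_v$ tessellate $S^n$: restricting the partition $\{V_v\}$ to the $(n-1)$-simplex $h$ decomposes $h$ into the disjoint pieces $h \cap V_v$ (disjoint by property 2, with any shared boundaries being of $(n-1)$-measure zero), so their $(n-1)$-volumes sum to $|h|$. I expect this to be the only delicate point, since one must confirm that the tessellation of $S^n$ genuinely induces a covering of each hinge with no gaps and no overlaps of positive $(n-1)$-measure, which is precisely what properties 2 and 3 of Definition~\ref{def:V_v} supply. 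Substituting $\sum_v |h \cap V_v| = |h|$ then gives
\[
\iH_{S^n} = \frac{1}{n} \sum_{h \subset S^n} |h| \, \phi_h,
\]
as claimed, recovering the Steiner--Hartle--Sorkin form \eqref{HH81} for $n=3$.
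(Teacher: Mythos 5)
Your proposal is correct and follows essentially the same route as the paper's proof: summing the vertex-region integrals from Theorem~\ref{thm:H_v} over all vertices and using the tessellation property of Definition~\ref{def:V_v} to regroup the contributions hinge by hinge, so that the pieces $|h \cap V_v|$ reassemble into $|h|$. Your version simply makes explicit the interchange of summation and the disjointness argument that the paper states in words.
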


\begin{proof}
	The total mean curvature over $S^n$ is equal to the sum of the mean curvature integrals for each vertex region $V_v$. Since these regions form a tessellation of $S^n$, each part of a given hinge $h$ must be contained in a single region. With the hinge angles $\phi_h$ fixed over $h$, the overall contribution from each hinge is $\frac{1}{n} |h| \phi_h$. The total integral is then given by the sum of these terms for all hinges $h \subset S^n$.
\end{proof}

\subsection{Piecewise flat directed curvature}
\label{sec:Kh}

There are two types of piecewise flat graph elements that have a direction associated with them, the edges and the $(n - 1)$-simplices, or hinges. For a curvature derived from the hinge angles, the hinges provide a natural choice for the construction of a piecewise flat directed curvature. An $n$-volume region defined around a hinge should then have the following characteristics:
\begin{itemize}
	\item the regions should intersect more than one hinge;
	
	\item they should not tessellate $S^n$;
	
	\item they should be intrinsically flat;
	
	\item the resulting curvature constructions should change continuously for small variations in the triangulation $S^n$.
\end{itemize}
A sampling of more than one hinge is required, as piecewise flat approximations can often lead to flat quadrilaterals, where a zero hinge on the diagonal results more from the graph structure than the curvature of the smooth manifold. See the regular triangulation of the cylinder in figure \ref{fig:hinge}, for example. The hinge regions should not form a tessellation of $S^n$, as each point on a smooth manifold has $n$ principle curvatures, so points on $S^n$ should also be contained in more than one hinge region. The intrinsic flatness ensures that regions can be foliated into parallel line segments orthogonal to the hinge in a unique way, and as with the mean curvature, the final characteristic ensures that there are no jumps in the curvature approximations due to small changes in the triangulation.

To give a large enough sampling of hinges, a union of the vertex regions $V_v$ intersecting the hinge $h$ are used. However, since these unions will not generally be intrinsically flat, boundaries are included within each vertex region that are orthogonal to the hinge $h$.

\begin{definition}[Hinge regions]
	\label{def:V_h}
	The $n$-dimensional region $V_h$ associated with the hinge $h$ is defined as the union of the vertex-dual regions $V_v$, for vertices $v$ in the closure of $h$, intersecting the geodesic extensions $\gamma_h$ of the lines orthogonal to $h$ in $S^n$,
	\begin{equation}
		V_h
		:= \left(\cup_v \, V_v\right)
		\cap \int_h \gamma_h \, \mathrm{d} V^{n-1} ,
	\end{equation}
	for the vertices $v$ that are contained in the closure, or boundary, of the hinge $h$.
\end{definition}

Each region $V_h$ contains all of the hinge $h$, and parts of other hinges, and will overlap with the regions associated with these other hinges. The average curvature orthogonal to each hinge $h$ is given by integrating over these hinge regions.

\begin{figure}[h]
	\centering
	\includegraphics[scale=1]{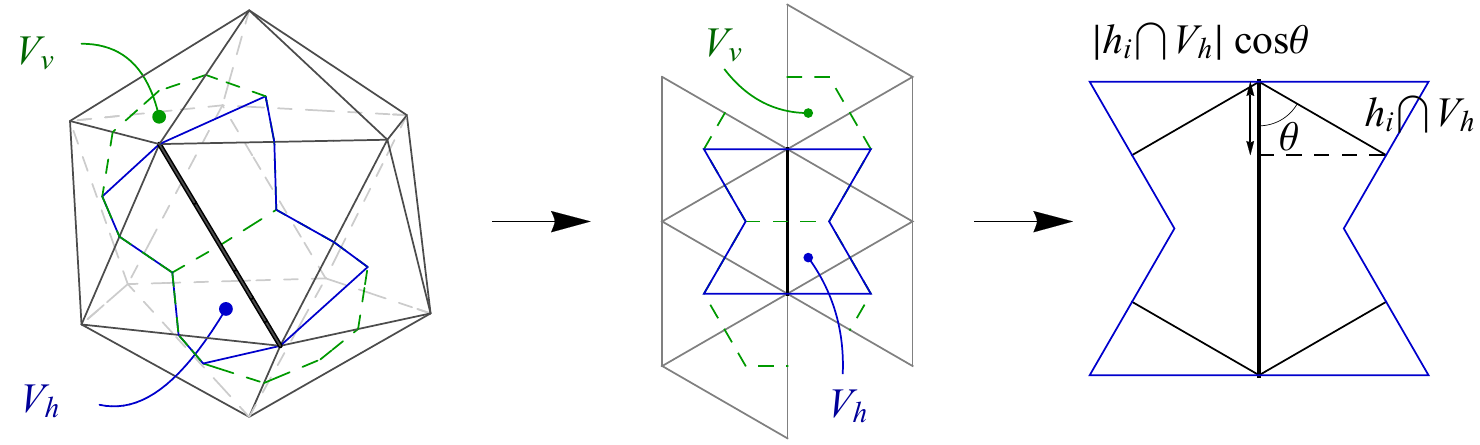}
	\caption{The hinge region for an icosahedron in $\euc{3}$, with the orthogonal projection 
		of $h_i \cap V_h$ onto $h$ shown on the right.}
	\label{fig:kappah}
\end{figure}

\begin{theorem}[Piecewise flat directed curvature]
	\label{thm:kappa_h}
	Assuming the curvature integrals in definitions \ref{def:geoInt} and \ref{def:DirectedInt}, the average curvature over the region $V_h$, in the direction $\hv$ orthogonal to $h$ within $V_h$, 
	\begin{equation}
		\label{kappa_h}
		\kappa_h
		:= \frac{1}{|V_h|} \, \ik_{V_h} (\hv)
		= \frac{1}{|V_h|} 
		\sum_{i} |h_i \cap V_h| \, \cos^2 \theta_i \, \phi_i
		+ O(\phi_i^3)
		,
	\end{equation}
	with the sum taken over all of the hinges $h_i$ that intersect $V_h$, where $\phi_i$ represents the hinge angle at $h_i$, and $\theta_i$ the angle between $h_i$ and $h$ within $S^n$.
\end{theorem}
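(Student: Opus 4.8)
The plan is to run the general procedure of section \ref{sec:GenApproach} directly, reducing the $n$-volume directed curvature integral to a single $(n-1)$-dimensional integral over the foliation that defines $V_h$. By construction in Definition \ref{def:V_h}, the region $V_h$ is assembled so as to be intrinsically flat and foliated by the straight geodesic extensions $\gamma_h$ orthogonal to $h$; I would begin by fixing this foliation, with common direction $\hv$, and setting up Cartesian coordinates on $V_h$ so that the quotient $V_h/\gamma_h$ is the flat $(n-1)$-dimensional base orthogonal to $\hv$. Definition \ref{def:DirectedInt} then gives $\ik_{V_h}(\hv) = \int_{V_h/\gamma_h} \ik_{\gamma_h} \, dV^{n-1}$, so the whole computation comes down to evaluating the geodesic-tangent integral along each leaf and integrating the result over the base.

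Next I would apply Theorem \ref{thm:geoIntS} to each leaf $\gamma_h$: since each leaf is intrinsically straight, $\ik_{\gamma_h} = \sum_i \cos\theta_i \, \phi_i + O(\phi_i^3)$, where the sum runs over the hinges $h_i$ that the particular leaf happens to cross. The angle appearing here is the angle the leaf makes with the normal to $h_i$; because the leaves run along $\hv$, which is itself the normal to $h$, this equals the angle between the normals to $h$ and $h_i$, which for two codimension-one hinges is exactly the angle $\theta_i$ between $h_i$ and $h$ within $S^n$. This identification supplies the first factor of $\cos\theta_i$. Substituting into the base integral and exchanging the (finite) sum with the integral, the contribution of a fixed hinge $h_i$ becomes $\cos\theta_i \, \phi_i$ times the $(n-1)$-volume of the set of base points whose leaf actually meets $h_i$ — that is, the orthogonal projection of $h_i \cap V_h$ onto the base along $\hv$.

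The crux of the argument, and the step I expect to be the main obstacle, is to show that this projected volume equals $|h_i \cap V_h| \cos\theta_i$, thereby producing the second factor of $\cos\theta_i$ and the characteristic $\cos^2\theta_i$ weighting (this is what Figure \ref{fig:kappah} is meant to illustrate). Here $h_i \cap V_h$ lies in the hyperplane with normal $\hat{n}_i$, the base lies in the hyperplane with normal $\hv$, and orthogonal projection between two hyperplanes scales $(n-1)$-volumes by the cosine of the angle between their normals, which is again $\theta_i$. Care is needed to confirm that the region $V_h$ really is flat enough for this single projection to be valid globally rather than only piecewise, which is precisely what the orthogonal boundaries built into Definition \ref{def:V_h} guarantee. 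Collecting the contributions of all hinges $h_i$ intersecting $V_h$ gives $\ik_{V_h}(\hv) = \sum_i |h_i \cap V_h| \cos^2\theta_i \, \phi_i + O(\phi_i^3)$, and dividing by $|V_h|$ yields the stated expression for $\kappa_h$.
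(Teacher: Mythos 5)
Your proposal is correct and follows essentially the same route as the paper's own proof: foliate $V_h$ by the straight segments orthogonal to $h$, apply Definition \ref{def:DirectedInt} and Theorem \ref{thm:geoIntS} leaf by leaf, swap the sum and the base integral, and identify the measure of leaves meeting $h_i$ with the orthogonal projection of $h_i \cap V_h$ onto $h$, which contributes the second factor of $\cos\theta_i$. Your explicit identification of the leaf--normal angle with the hinge--hinge angle $\theta_i$, and your remark that the projection argument needs the global flatness ensured by the orthogonal boundaries in Definition \ref{def:V_h}, make precise two points the paper leaves implicit (the latter via Figure \ref{fig:kappah}).
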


\begin{proof}
	Since the region $V_h$ is intrinsically flat within $S^n$, it can be foliated into line-segments $\gamma$ orthogonal to the hinge $h$. By definition \ref{def:DirectedInt} and theorem \ref{thm:geoIntS},
	\begin{equation}
		\ik_{V_h} (\hv)
		= \int_{V_h/\gamma} \ik_\gamma d V^{n - 1}
		= \int_{V_h/\gamma} \left[
		\sum_{i} \cos \theta_i \, \phi_i
		+ O(\phi_i^3)
		\right] d V^{n - 1}
	\end{equation}
	with the sum taken over all hinges $h_i$ that each line-segment $\gamma$ intersects. Swapping the sum and integral, so that each hinge $h_i$ can be treated separately,
	\begin{equation}
		\ik_{V_h} (\hv)
		= \sum_{i} \int_{V_h/\gamma} 
		  \left[
		    \cos \theta_i \, \phi_i + O(\phi_i^3) 
		  \right]
		  d V^{n - 1}
		= \sum_{i}
		  \left(
		  \cos \theta_i \, \phi_i + O(\phi_i^3) 
		  \right) \int_{V_h/\gamma} d V^{n - 1},
		  \label{eq:kh-proof1}
	\end{equation}
	with $\left(\cos \theta_i \, \phi_i + O(\phi_i^3)\right)$ factored outside the integral since both $\theta_i$ and $\phi_i$ are invariant along the hinge $h_i$. The orthogonal space $V_h/\gamma$ is the space of the hinge $h$ itself, and for each hinge $h_i$, the integrals above are taken over the portion of $h$ that is intersected by line segments $\gamma$ that also intersect $h_i$ within $V_h$. This is equivalent to the orthogonal projection of $h_i \cap V_h$ onto $h$, as shown on the right of figure \ref{fig:kappah}, so
	\begin{equation}
		\int_{V_h/\gamma} d V^{n - 1} 
		= |h_i \cap V_h| \, \cos \theta_i.
	\end{equation}
	Substituting this into (\ref{eq:kh-proof1}) above,
	\begin{equation}
		\ik_{V_h} (\hv) 
		= \sum_{i} |h_i \cap V_h| \, \cos^2 \theta_i \, \phi_i
		+ O(\phi_i^3),
	\end{equation}
	which can be divided by the $n$-volume of the region $V_h$ to give an average value for the curvature orthogonal to $h$ in $V_h$.
\end{proof}

As with the mean curvature, the piecewise flat directed curvature $\kappa_h$ will change continuously for variations in the triangulation $S^n$ of a smooth manifold $M^n$, as long as the triangulation remains well-defined for a barycentric dual, and remains Delaunay for a Voronoi dual. In particular, the $\cos^2 \theta_i$ part of each term ensures that the contribution from a hinge $h_i$ goes to zero as the hinge gets closer to the boundaries on the interior of a vertex region. 
The $\cos^2 \theta$ coefficient also transforms in the exact way expected of a quadratic form for changes in the angle $\theta$. In fact, for a vertex region $V_v$ that can be cut so that it is intrinsically flat, a sum of expressions of the same form as equation (\ref{kappa_h}) for a set of orthogonal directions can be used to give the mean curvature expression $H_v$ from theorem \ref{thm:H_v}.

In two dimensions, the piecewise flat directed curvature at the hinges (edges) completely determines the curvature within each triangle. For a triangle with edges $\ell_1$, $\ell_2$ and $\ell_3$,
\begin{equation}
	|\ell_1| \, \hv_1
	+ |\ell_2| \, \hv_2
	+ |\ell_3| \, \hv_3
	= 0 ,
	\label{eq:Minkowski}
\end{equation}
for vectors $\hv_i$ orthogonal to $\ell_i$ and tangent to the given triangle. Taking $\hv_1$ and $\hv_2$ as basis vectors, the relation above can be used to give the second fundamental form for mixed arguments. Using the bilinear nature of the second fundamental from, 
\begin{eqnarray}
	\alpha (\hv_1, \hv_2)
	&=& \frac{\alpha (|\ell_1| \, \hv_1, |\ell_2| \, \hv_2)}{|\ell_1| \, |\ell_2|} 
	\nonumber \\
	&=& \frac{1}{2 \, |\ell_1| \, |\ell_2|}\big(
	- \kappa(|\ell_1| \, \hv_1)
	- \kappa(|\ell_2| \, \hv_2)
	+ \kappa(|\ell_1| \, \hv_1 + |\ell_2| \, \hv_2)
	\big) \nonumber \\
	&=& \frac{1}{2 \, |\ell_1| \, |\ell_2|}\big(
	- |\ell_1|^2 \, \kappa_{\ell_1}
	- |\ell_2|^2 \, \kappa_{\ell_2}
	+ |\ell_3|^2 \, \kappa_{\ell_3}
	\big) ,
\end{eqnarray}
with equation (\ref{eq:MixedForm}) used in the second line, and (\ref{eq:Minkowski}) for the final expression. Using this, a complete curvature tensor can be formed for the interior of each triangle.

\section{Computations}
\label{sec:Comp}

\subsection{Surfaces in Euclidean space}
\label{sec:CompEuc}

To compare with existing approaches, computations have been carried out for triangulations of a modified sphere and a peanut-shaped surface in $\euc{3}$, the latter having both positive and negative curvatures. The surfaces are defined in spherical-polar coordinates by the radial functions
\begin{align}
	r_S(\theta, \phi)
	&= \sqrt{
		\left(1 + \frac{1}{4} \sin^2 \theta\right)
		\left(1 + \frac{1}{4} \sin^2 \theta \cos^2 \phi\right)	
	} \ ,
	\\
	r_P(\theta, \phi)
	&= \sqrt{
		\left(\frac{1}{2} + \cos^2 \theta\right)
		\left(1 + \frac{1}{4} \sin^2 \theta \cos^2 \phi\right)	
	} \ .
\end{align}
In order to indicate any convergence to smooth curvature values, triangulations of both surfaces were created using progressively more layers of triangles between the two poles. The resulting numbers of vertices, edges and triangles, and the mean and largest absolute values of the hinge angles are given in table \ref{tab:CompTriang}.

\begin{table}[h!]
	\centering
	\begin{tabular}{lrccc|cc|cc
		}
		&&\multicolumn{3}{c|}{No. simplices}
		&\multicolumn{2}{c|}{Modified sphere}
		&\multicolumn{2}{c}{Peanut-shape}
		\\
		\cline{3-9}
		&
		&Triangles & Edges & Vertices
		&\hspace{0.3cm} Mean \hspace{0.3cm} & Largest
		&\hspace{0.3cm} Mean \hspace{0.3cm} & Largest
		\\
		\cline{3-9}
		\multicolumn{1}{l}{} 
		& 6 Layers 
		& 96 & 144 & 50
		& $18^\circ$ & $32^\circ$
		& $20^\circ$ & $37^\circ$
		\\
		\multicolumn{1}{l}{} 
		& 10 Layers
		& 252 & 378 & 128
		& $11^\circ$ & $18^\circ$
		& $13^\circ$ & $24^\circ$
		\\
		\multicolumn{1}{l}{} 
		& 14 Layers
		& 480 & 720 & 242
		& $8.2^\circ$ & $13^\circ$
		& $9.7^\circ$ & $18^\circ$
		\\
		\multicolumn{1}{l}{} 
		& 18 Layers
		& 780 & 1170 & 392
		& $6.4^\circ$ & $9.8^\circ$
		& $7.6^\circ$ & $14^\circ$
		\\
		\multicolumn{1}{l}{} 
		& 22 Layers
		& 1152 & 1728 & 578
		& $5.2^\circ$ & $8.1^\circ$
		& $6.3^\circ$ & $12^\circ$
		\\
	\end{tabular}
	\caption{The number of triangles, edges and vertices in each simplicial graph, and the mean and largest absolute value of the hinge angles, in degrees, for each triangulation.
	}
	\label{tab:CompTriang}
\end{table}

Along with the new curvature constructions, the cotan formula and Cohen-Steiner and Morvans anisotropic curvature have been computed for each triangulation. Voronoi, or circumcentric dual regions were used to compute both the new piecewise flat mean curvature and the cotan formula. The new piecewise flat directed curvature was computed at each edge, and the anisotropic curvature measure from Cohen-Steiner and Morvan was computed over each Voronoi dual reagion, and divided by the region area to give an average tensor at each vertex. Visual representations of the triangulations and their dual regions are shown in figure \ref{graph:MeanContour}, with a colour-scale used to display the piecewise flat mean curvature of each region.

\begin{figure}[h]
	\centering
	\includegraphics[scale=1]{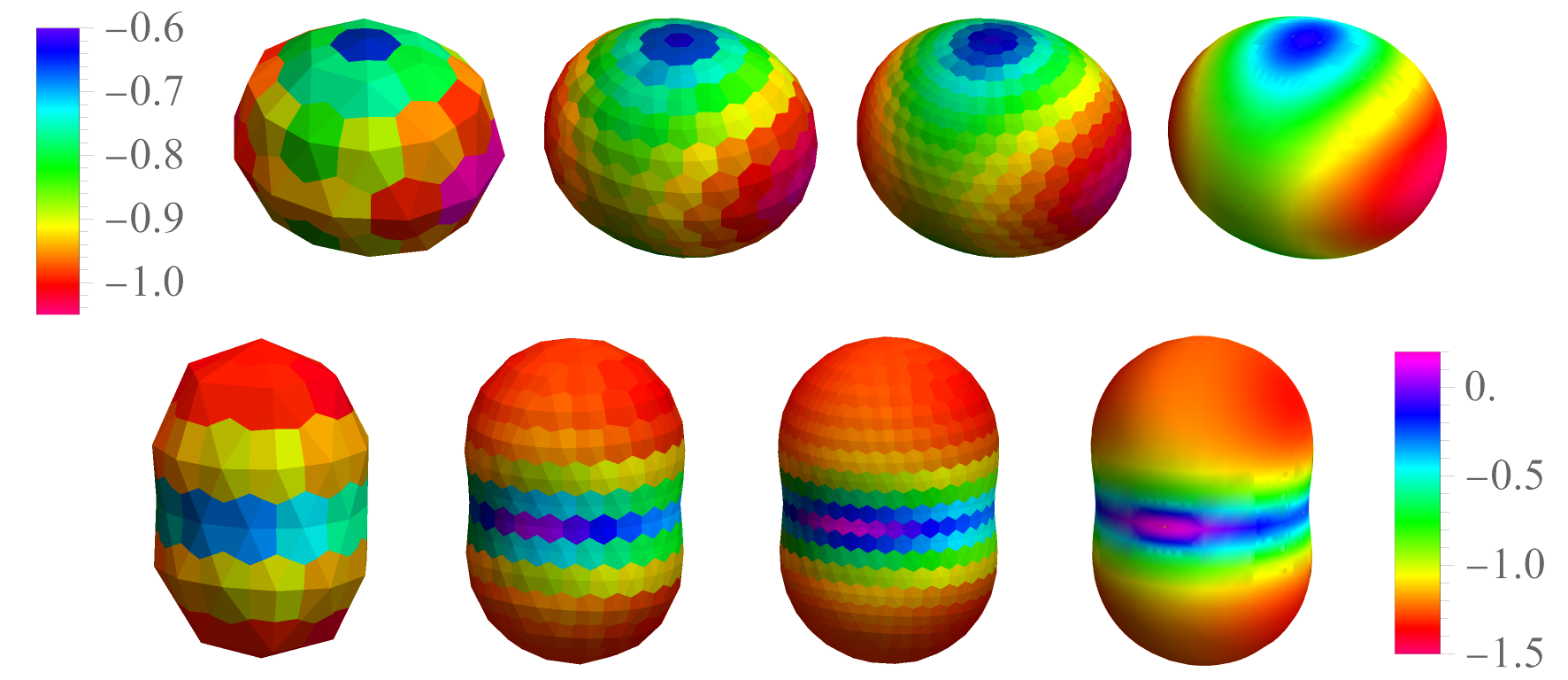}
	\caption{Contour plots showing the piecewise flat mean curvature over each dual region for the 6, 14 and 22 layer triangulations of the modified sphere on top and peanut-shaped surface on the bottom. The smooth mean curvature for each is shown on the far right.}
	\label{graph:MeanContour}
\end{figure}

To measure the effectiveness of each approximation type, the differences between the approximations and their corresponding smooth curvature values are computed. For the new piecewise flat mean curvature and the cotan formula, the value at a given vertex is compared with the smooth curvature at the surface point corresponding with the vertex. For the new piecewise flat directed curvature, the value at an edge is compared with the smooth curvature at the midpoint of the corresponding geodesic segment, in a direction tangent to the surface and orthogonal to the geodesic segment at that point. For the Cohen-Steiner and Morvan curvature, rather than comparing tensors directly, the principal curvatures were computed for the average tensor at each vertex, and compared with the smooth principal curvatures at the corresponding smooth points. In figure \ref{graph:CompErr}, the mean absolute values of these differences are graphed against the mean absolute value of the hinge angles for each triangulation, with error bars denoting the standard deviations of each. To relate the errors for different curvature types, and in a scale-invariant way, numeric values for the errors are presented in table \ref{tab:CompErr} as percentages of the mean absolute value of the principal curvatures for each surface.

\begin{figure}[h]
\centering
\includegraphics[scale=1]{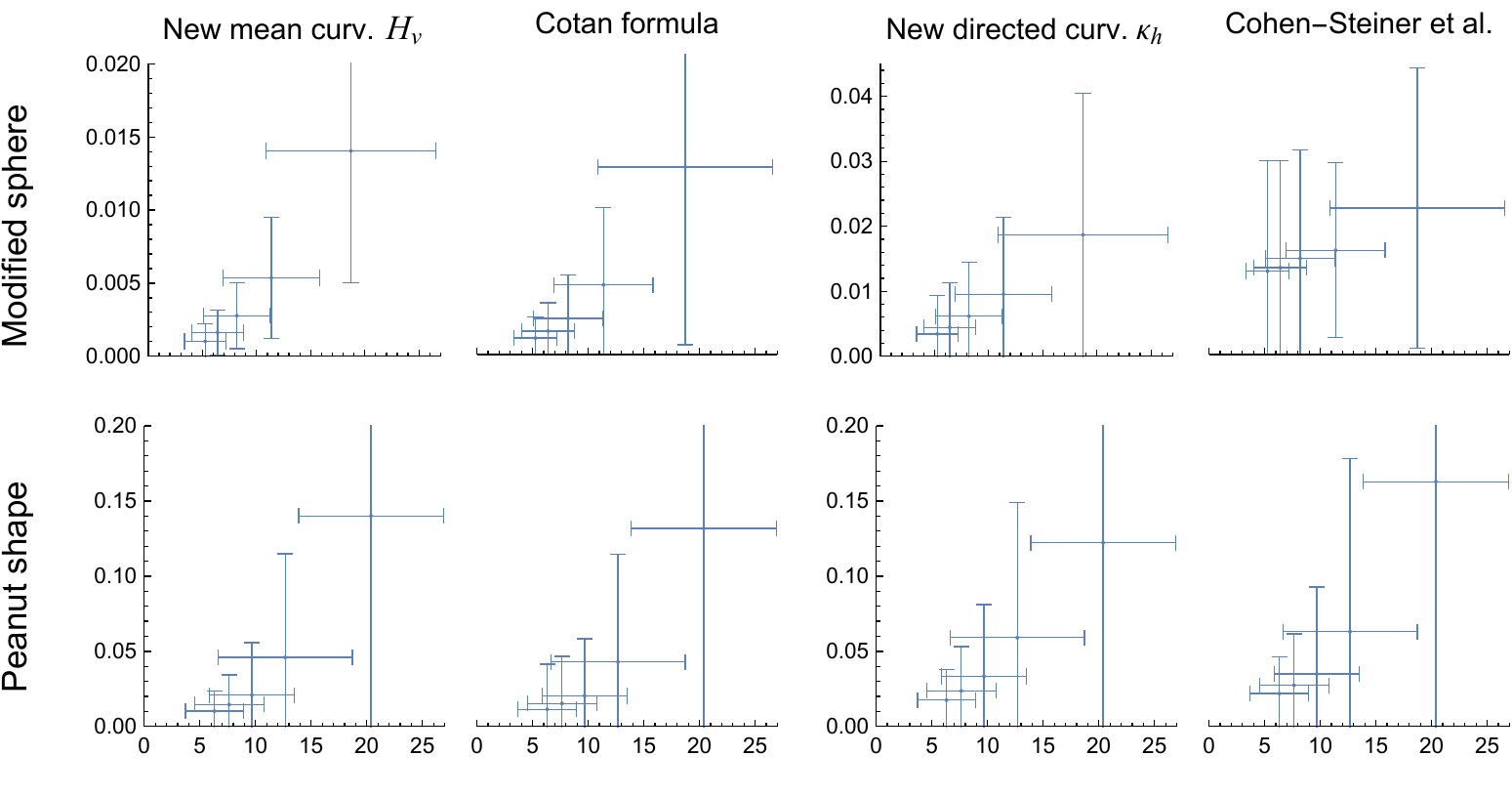}
\caption{The mean absolute errors are plotted against the mean absolute hinge angles, in degrees, with the bars representing the standard deviations. The errors for the new approaches indicate a clear convergence to zero, and have similar values to the other approaches.
}
\label{graph:CompErr}
\end{figure}

\begin{table}[h!]
	\centering
	\begin{tabular}{lrcccc|cccc
		}
		&&\multicolumn{4}{c|}{Modified sphere}
		&\multicolumn{4}{c}{Peanut-shape}
		\\
		\cline{3-10}
		&
		& \ \ Mean \ \ &Cotan
		& Directed &CSM
		& \ \ Mean \ \ &Cotan
		& Directed &CSM
		\\
		\cline{3-10}
		\multicolumn{1}{l}{} 
		& 6 Layers 
		& $1.6\%$ 
		& $1.4\%$ 
		& $2.2\%$ 
		& $2.5\%$ 
		& $12\%$  
		& $12\%$  
		& $11\%$  
		& $14\%$  
		\\
		\multicolumn{1}{l}{} 
		& 10 Layers
		& $0.62\%$ 
		& $0.53\%$ 
		& $1.1\%$  
		& $1.8\%$  
		& $4.1\%$  
		& $3.8\%$  
		& $5.2\%$  
		& $5.6\%$  
		\\
		\multicolumn{1}{l}{} 
		& 14 Layers
		& $0.32\%$ 
		& $0.28\%$ 
		& $0.71\%$ 
		& $1.7\%$  
		& $1.9\%$  
		& $1.8\%$  
		& $3.0\%$  
		& $3.1\%$  
		\\
		\multicolumn{1}{l}{} 
		& 18 Layers
		& $0.19\%$ 
		& $0.18\%$ 
		& $0.51\%$ 
		& $1.5\%$  
		& $1.3\%$  
		& $1.4\%$  
		& $2.1\%$  
		& $2.4\%$  
		\\
		\multicolumn{1}{l}{} 
		& 22 Layers
		& $0.12\%$ 
		& $0.13\%$ 
		& $0.40\%$ 
		& $1.4\%$  
		& $0.9\%$  
		& $1.0\%$  
		& $1.6\%$  
		& $1.9\%$  
		\\
	\end{tabular}
	\caption{The mean errors, as percentages of the mean absolute value of the principal curvatures for each surface. As with the graphs, the values above indicate a clear convergence to zero for the new approaches, and give comparable values to the other approaches.
	}
	\label{tab:CompErr}
\end{table}

The results of the contour plots, error graphs and the table of percentage errors strongly support the effectiveness of the new curvature constructions. All show reasonable approximations for the lowest resolutions, especially when the new mean curvature is viewed as a spatial average in the contour plots. 
The error graphs and table also show a clear convergence to zero as the resolution is increased, with the contour plots showing the emergence of more and more of the finer details visible in the smooth mean curvature plots. 
Finally, the new constructions compare very favourably with the other approaches presented. The errors for the piecewise flat mean curvature are extremely close to those of the cotan formula for all triangulations, and the errors for the piecewise flat directed curvature are on a similar scale to those of the principal curvatures from Cohen-Steiner and Morvan's approach.

\subsection{Non-Euclidean embedding}
\label{sec:CompNonEuc}

For a non-Euclidean embedding space, the spatial part of a Gowdy space-time model \cite{Gowdy} is used, with metric
\begin{equation}
	d s^2
	= e^{0.1 \sin (z)} d x^2
	+ e^{-0.1 \sin(z)} d y^2
	+ d z^2.
\end{equation}
This space is homogeneous in the $x$ and $y$-directions, periodic in the $z$-direction, and can be viewed as a plane gravitational wave, stretching and shrinking alternately in the $x$ and $y$-directions as the $z$-value is changed. The embedded surface is defined by the tangent vectors
\begin{equation}
	\left<0, \, 1, \, 0\right>, \ 
	\frac{1}{3} \left<-1, \, 0, \, \pi\right>,
	\label{eq:GowdySurf}
\end{equation}
giving a coordinate plane tilted away from the $z$-axis, which was deemed to have a simple-enough form but with an interesting variation in extrinsic curvature. The positive orientation of the surface is defined as the side with the positive $x$-direction.

\begin{figure}[h]
	\centering
	\includegraphics[scale=1]{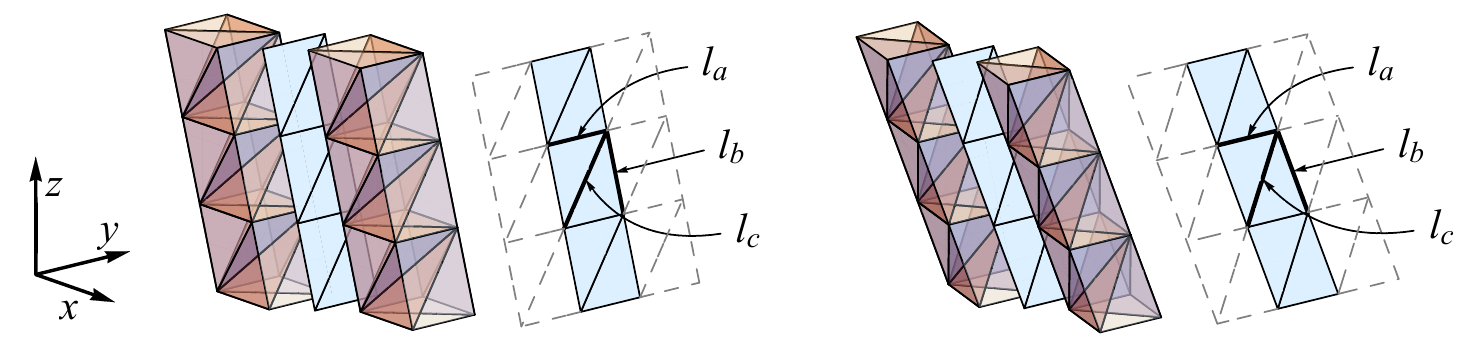}
	\caption{The rectangular grid is shown on the left and skewed grid on the right, along with the three-dimensional triangulations that are used to determine the hinge angles.}
	\label{fig:GowdyTriang}
\end{figure}

The surface is triangulated using both a rectangular grid, aligned with the vectors in (\ref{eq:GowdySurf}), and a grid that is skewed to ensure the triangulation remains Delaunay, as shown in figure \ref{fig:GowdyTriang}. The vectors tangent to the graph segments on the surface are given below,
\begin{align}
	\textrm{Rectangular: } \ \
	& \vec{v}^R_a = \left<0, \, 1, \, 0\right>, \
	&&\vec{v}^R_b = \frac{1}{3} \left<-1, \, 0, \, \pi\right>, \
	&&\vec{v}^R_c = \vec{v}^R_a + \vec{v}^R_b,
	\nonumber \\
	\textrm{Skewed: } \ \
	& \vec{v}^S_a = \left<0, \, 1, \, 0\right>, \
	&&\vec{v}^S_b = \frac{1}{3} \left<-1, \, -\frac{2}{3}, \, \pi\right>, \
	&&\vec{v}^S_c = \vec{v}^S_a + \vec{v}^S_b.
	\label{eq:GowdyTriVecs}
\end{align}
Since changes in both the surface and embedding space only occur as the $z$-value is changed, grids with $6$, $12$, $24$ and $48$ blocks over a full period in the $z$ direction are used to give different resolution triangulations, with the grid widths re-scaled accordingly. As explained in section \ref{sec:PFnonE}, the hinge angles are defined by constructing a three-dimensional triangulation around the surface, as shown in figure \ref{fig:GowdyTriang}. Details of these triangulations can be found in \cite{PLCurv} and \cite{PLRFGowdy}, where they were used to give piecewise flat approximations of the scalar and Ricci curvature in the first, and the Ricci flow of the Gowdy three-space in the latter.

The mean curvature was computed over barycentric duals for the rectangular grid, since the triangulations are not Delaunay, and Voronoi or circumcentric duals for the skewed grid. As the surface and embedding space are homogeneous in the $x$ and $y$ directions, the curvatures can be plotted as functions of the $z$ coordinate, with graphs of the mean curvature shown in figure \ref{graph:GowdyMean} and the directed curvature for each edge-type in figure \ref{graph:GowdyExt}. The approximations are graphed as piecewise linear curves, with the curvature approximations represented by the points where segments join. For the mean curvature, the approximations are located at the $z$-value of the smooth point corresponding with the piecewise flat vertex, and for the directed curvature, the $z$-values represent the midpoint of the geodesic segment corresponding with the given edge. The smooth mean curvature is also graphed as a function of $z$ in figure \ref{graph:GowdyMean}, and for each edge-type in figure \ref{graph:GowdyExt}, the smooth curvature at the midpoint of the corresponding geodesic segments, tangent to the surface and orthogonal to the corresponding vectors in (\ref{eq:GowdyTriVecs}), are also graphed.

\begin{figure}[h]
	\centering
	\includegraphics[scale=1]{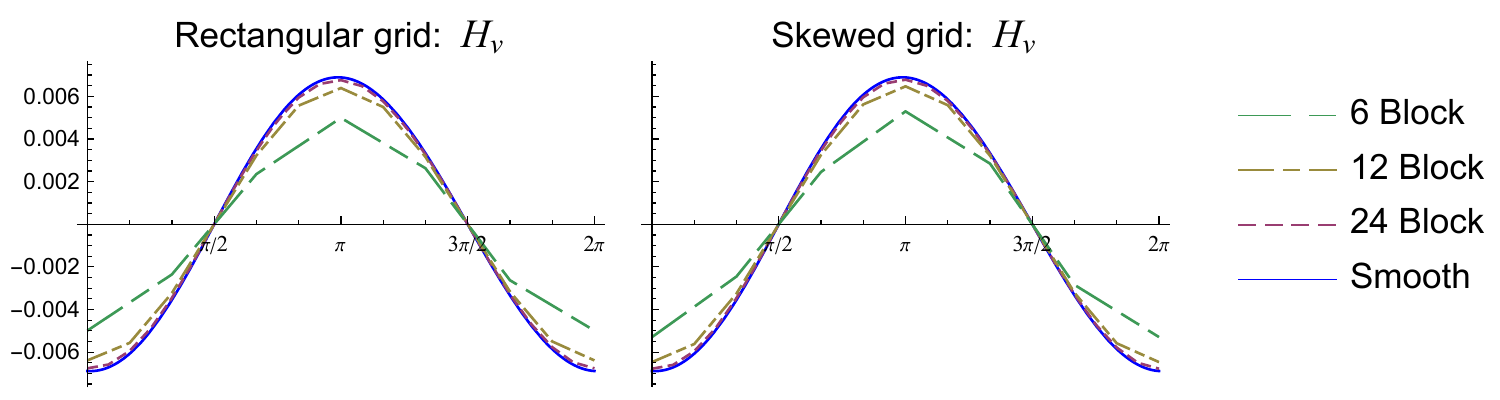}
	\caption{Piecewise linear curves are used to represent the piecewise flat mean curvature approximations as functions of the $z$-coordinate, showing a convergence to the smooth mean curvature as the resolution is increased.}
	\label{graph:GowdyMean}
\end{figure}

\begin{figure}[h]
	\centering
	\includegraphics[scale=1]{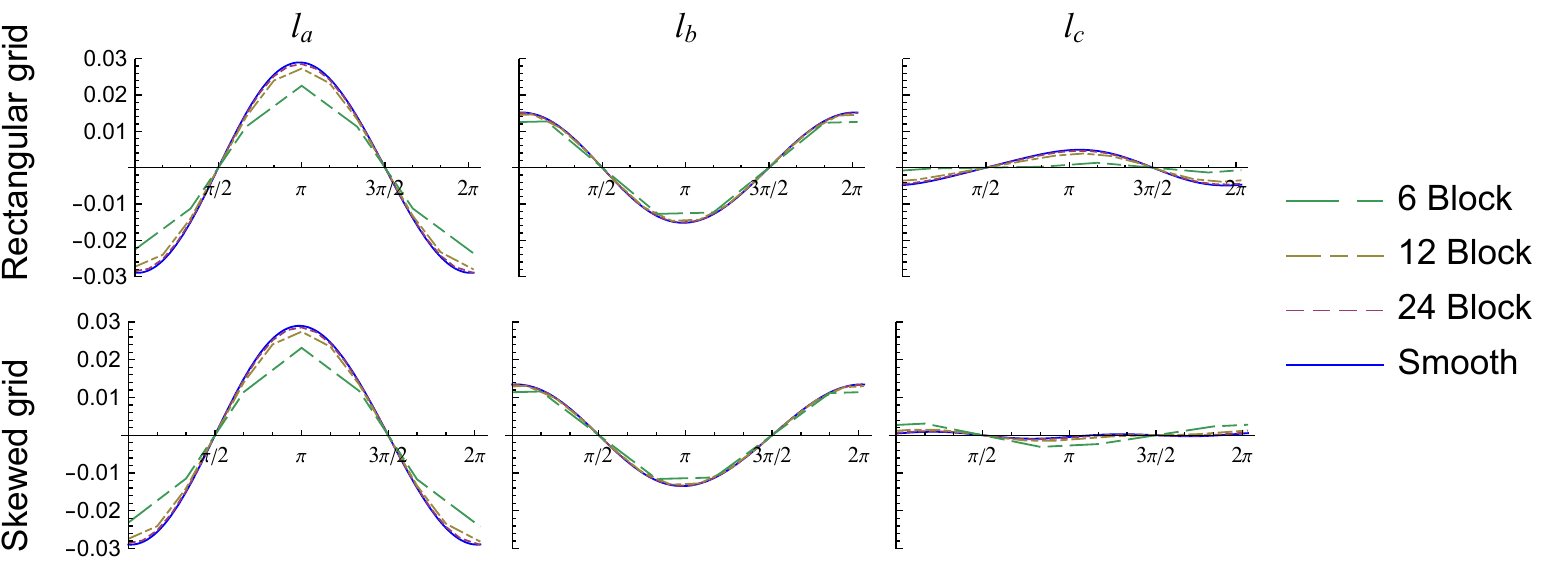}
	\caption{The piecwise flat directed curvatures orthogonal to each edge-type, and tangent to the surface, are shown as piecewise linear functions of the $z$-coordinate, clearly approaching the corresponding smooth directed curvatures as the resolution is increased.}
	\label{graph:GowdyExt}
\end{figure}

As in the previous section, numeric values for the errors have also been computed. The mean curvature approximation at each vertex was compared with the smooth mean curvature at the corresponding $z$-value, and the piecewise flat directed curvature at each edge was compared with the smooth curvature at the midpoint of the corresponding geodesic segment, in the direction tangent to the surface and orthogonal to the geodesic segment at the midpoint. In order to compare the different curvature types in a scale-invariant way, table \ref{tab:Gowdy} presents the mean of the absolute values of the errors, as percentages of the mean absolute value of the smooth principal curvatures. To compare the effectiveness of each triangulation, the mean of the absolute values of the hinge angles, in degrees, are also given.

\begin{table}[h!]
	\centering
	\begin{tabular}{lrccc|ccc}
		&&\multicolumn{3}{c|}{Rectangular Grid}
		&\multicolumn{3}{c}{Skew Grid}
		\\
		\cline{3-8}
		&
		&Hinge Ang. & \ \ Mean \ \ 
		& \ Directed \
		&Hinge Ang. & \ \ Mean \ \ 
		& \ Directed \
		\\
		\cline{3-8}
		\multicolumn{1}{l}{} 
		& 6 Blocks 
		& $0.48^\circ$ 	
		& $9.0\%$  		
		& $16\%$  		
		& $0.52^\circ$ 	
		& $7.6\%$  		
		& $13\%$  		
		\\
		\multicolumn{1}{l}{} 
		& 12 Blocks
		& $0.27^\circ$ 	
		& $2.2\%$ 		
		& $4.4\%$ 		
		& $0.29^\circ$ 	
		& $1.8\%$ 		
		& $3.5\%$ 		
		\\
		\multicolumn{1}{l}{} 
		& 24 Blocks
		& $0.14^\circ$ 	
		& $0.57\%$ 		
		& $1.1\%$ 		
		& $0.15^\circ$ 	
		& $0.47\%$ 		
		& $0.9\%$ 		
		\\
		\multicolumn{1}{l}{} 
		& 48 Blocks
		& $0.07^\circ$ 	
		& $0.14\%$ 		
		& $0.28\%$ 		
		& $0.08^\circ$ 	
		& $0.12\%$ 		
		& $0.22\%$ 		
		\\
	\end{tabular}
	\caption{
		The mean absolute values of the hinge angles, and the mean errors, as percentages of the mean absolute value of the smooth principal curvatures for the surface.
	}
	\label{tab:Gowdy}
\end{table}

The results strongly support the effectiveness of the new constructions, and their use for surfaces embedded in non-Euclidean spaces. For the two lower resolutions, involving only 12 and 24 triangles, the graphs closely model the behaviour of the smooth curvature, with reasonable percentage errors. The graphs then show a clear convergence to the smooth curvature, and the percentage errors decrease by a factor of about a quarter each time the number of triangles is doubled, a square of the approximate factor that the hinge angles reduce by. The two triangulation types also agree closely with each other, with exceptionally similar graphs for the mean curvature, and for the directed curvature at the $\ell_a$ edges, which are common to both triangulation types. Interestingly, while the hinge angles are lower for the rectangular grid, the curvature approximations have lower mean errors for the skewed grid, on the order of $10\%$ lower in most cases. This is likely due to the triangles being more strongly Delaunay, and the resulting use of Voronoi duals, properties that seem to improve other approaches as well. However, it is reassuring to see that the errors do not increase dramatically for non-Delaunay triangulations.

To demonstrate the effectiveness of the higher resolution approximations, the directed curvature at the diagonal $\ell_c$ edges are graphed for the 24 and 48 block triangulations in figure \ref{graph:GowdyExtC}, with the 48 block approximations almost indistinguishable from the smooth curvature. These are particularly impressive approximations, with scales that are an order of magnitude lower than the curvature values for the $\ell_a$ edges, and a more complicated bahaviour. The computations for these curvatures are also dominated by the hinge angles at the $\ell_a$ and $\ell_b$ edges, making this behaviour impossible to achieve with a single hinge approach. For the rectangular grid, the hinge angles at $\ell_c$ are multiple orders of magnitude less than at $\ell_a$ (4 orders less for the 48-block triangulation), and for the skewed grid, the hinge angles at $\ell_c$ approximate a cosine curve with a period of $2 \pi$, quite different to the behaviour displayed on the right of figure \ref{graph:GowdyExtC}.

\begin{figure}[h]
	\centering
	\includegraphics[scale=1]{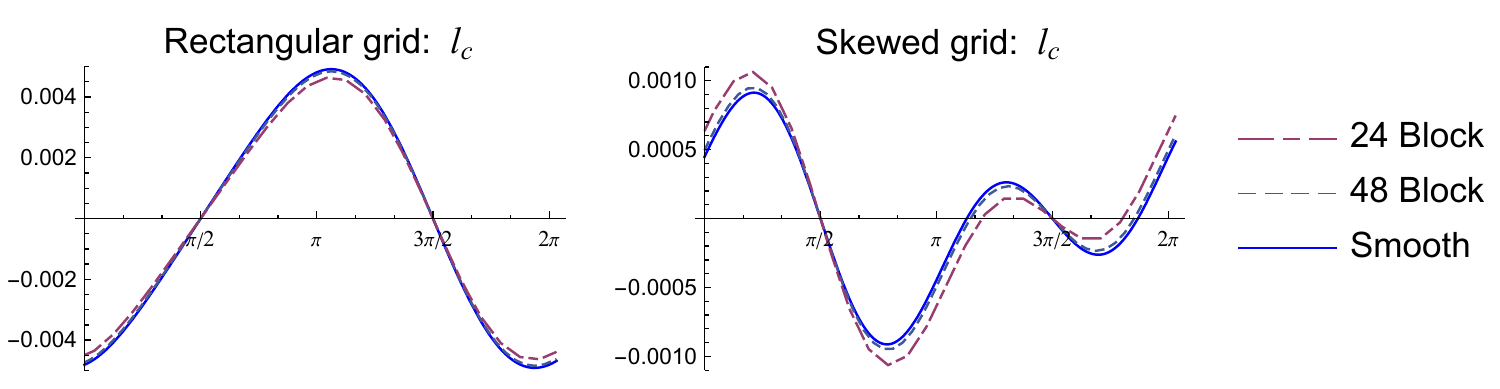}
	\caption{
		Piecewise flat directed curvature at the diagonal $\ell_c$ edges, similar to the graphs on the right of figure \ref{graph:GowdyExt}, but re-scaled. The higher resolutions give remarkably close approximations, particularly considering the more complicated behaviour and smaller scale.
	}
	\label{graph:GowdyExtC}
\end{figure}

\section{Conclusion}
\label{sec:Con}

Discrete forms of the mean and directed curvature have been constructed on piecewise flat approximations of smooth manifolds in a manner that aims to approximate the local smooth curvature and be applicable to manifolds embedded in non-Euclidean spaces. 
When dual regions $V_v$ are chosen to be Voronoi or barycentric, the resulting mean and directed curvature expressions, $H_v$ and $\kappa_h$ respectively, take the slightly simplified form 
\begin{align}
	H_v \
	= \ \frac{1}{n \, |V_v|}
	&\sum_{i} \, \frac{1}{2}|h_i| \, \phi_i ,
	\\
	\kappa_h \
	= \ \ \ \frac{1}{|V_h|} 
	&\sum_{i} \, \frac{1}{2}|h_i| \, \phi_i \, \cos^2 \theta_i
	,
\end{align}
where $h_i$ are the hinges that intersect the regions $V_v$ or $V_h$, $\phi_i$ are the corresponding hinge angles, and $\theta_i$ are the angles between the hinges $h_i$ and $h$. The hinge region $V_h$ is formed by the union of dual regions $V_v$, for vertices $v$ in the closure of the hinge $h$, bounded on their interior by regions orthogonal to $h$ within $S^n$, see figure \ref{fig:kappah} for example. The piecewise flat directed curvature $\kappa_h$ approximates the smooth curvature in a direction orthogonal to the hinge $h$ and tangent to the manifold.

Computations for a pair of surfaces in Euclidean space, and two different triangulation types for a surface in a non-Euclidean space, show these new curvatures to:
\begin{itemize}
	\item reasonably approximate the smooth curvature for low resolution triangulations and converge to the corresponding smooth curvature values as the resolution is increased;
	
	\item give consistent values for different triangulations of the same surface;
	
	\item apply to surfaces in both Euclidean and non-Euclidean spaces;
	
	\item closely match the results of other approaches for Euclidean embeddings.
\end{itemize}
The convergence of the new mean curvature is clearly demonstrated by the contour plots in figure \ref{graph:MeanContour}, and the convergence of the new directed curvature for a non-Euclidean embedding is remarkably clear for the two different triangulation types in figure \ref{graph:GowdyExtC} above.

For a piecewise flat approximation of a two-dimensional surface, a complete curvature tensor can be formed within each triangle, using only the directed curvature at the edges. Though there are not enough hinges on the boundary of an $n$-simplex in higher dimensions, there are the ideal number and orientation of edges. Unfortunately, 
constructing an edge-tangent curvature on a region similar to the hinge volume has not been successful. This is mostly due to the $\cos^2 \theta_i$ term having the opposite effect here, with hinges on the boundary having the highest weighting, leading to potential jumps in the curvature for slight variations in the triangulation. Work is currently underway to test different types of edge-regions.

The discrete \emph{intrinsic} curvatures in \cite{PLCurv} were developed following a similar approach to this paper, and the structures of the two fit together very closely, particularly when the piecewise flat manifold $S^n$ acts as the boundary of an $(n+1)$-dimensional piecewise flat manifold $R^{n+1}$. In this situation, the hinges of $S^n$ coincide with the co-dimension-two simplices of $R^{n+1}$, where the intrinsic deficit angles are defined, a connection that has already been used by Hartle and Sorkin \cite{HS81} to provide boundary terms for the Regge Calculus action using the hinge angles. On top of this, the vertex and hinge regions $V_v$ and $V_h$ act as boundaries for the regions used to construct the scalar and sectional curvature in \cite{PLCurv}. This makes the new piecewise flat extrinsic curvatures well suited to extend the piecewise flat Ricci flow of \cite{PLCurv} and \cite{PLRFGowdy} to manifolds with boundary.

\

\noindent
{\bf Data Availability:} \ The code used to perform the computations in section \ref{sec:Comp}, and the data generated, are available in the Zenodo repository, https://doi.org/10.5281/zenodo.7787062.

\section*{Acknowledgment}

This paper is dedicated to the memory of Niall {\'O} Murchadha. Niall was a patient teacher, a supportive mentor and a good friend. I will always remember your insightful perspectives and contagious enthusiasm. You are sadly missed.

\bibliography{Ref}
\bibliographystyle{unsrt}

\end{document}